\definecolor{dblue}{rgb}{0,0,0.70}
\newtheorem{theorem}
{Theorem}[section]
\newtheorem*{theorem*}{Theorem}
\newtheorem*{mtheorem}{Main Theorem}
\newaliascnt{lemma}{theorem}
\newtheorem{lemma}[lemma]{Lemma}
\newtheorem*{lemma*}{Lemma}
\newaliascnt{fact}{theorem}
\newaliascnt{proposition}{theorem}
\newtheorem{proposition}[proposition]{Proposition}
\newtheorem*{proposition*}{Proposition}
\newaliascnt{corollary}{theorem}
\newtheorem{corollary}[corollary]{Corollary}
\theoremstyle{remark}
\newaliascnt{remark}{theorem}
\newtheorem{remark}[remark]{Remark}
\newaliascnt{question}{theorem}
\newtheorem{question}[question]{Question}
\newaliascnt{conjecture}{theorem}
\newtheorem*{question*}{Question}
\newaliascnt{definition}{theorem}
\newtheorem{definition}[definition]{Definition}
\newaliascnt{example}{theorem}
\renewcommand{\restriction}{\mathbin\upharpoonright}
\newcommand{\axiom}[1]{\mathsf{#1}}
\newcommand{\ZFC}{\axiom{ZFC}}
\newcommand{\DC}{\axiom{DC}}
\newcommand{\ZF}{\axiom{ZF}}
\newcommand{\ZFA}{\axiom{ZFA}}
\newcommand{\GCH}{\axiom{GCH}}
\newcommand{\HS}{\axiom{HS}}
\DeclareMathOperator{\cf}{cf}
\DeclareMathOperator{\dom}{dom}
\DeclareMathOperator{\sym}{sym}
\DeclareMathOperator{\fix}{fix}
\DeclareMathOperator{\id}{id}
\DeclareMathOperator{\aut}{Aut}
\DeclareMathOperator{\Col}{Col}
\DeclareMathOperator{\Add}{Add}
\newcommand{\forces}{\mathrel{\Vdash}}
\newcommand{\nto}{\mathrel{\nrightarrow}}
\newcommand{\incompatible}{\mathrel{\bot}}
\newcommand{\compatible}{\mathrel{\|}}
\newcommand{\power}{\mathcal{P}}
\newcommand{\PP}{\mathbb P}
\newcommand{\QQ}{\mathbb Q}
\newcommand{\BB}{\mathbb B}
\newcommand{\CC}{\mathbb C}
\newcommand{\cA}{\mathcal A}
\newcommand{\cI}{\mathcal I}
\newcommand{\cL}{\mathcal L}
\newcommand{\sF}{\mathscr F}
\newcommand{\sG}{\mathscr G}
\newcommand{\ccc}[1]{%
  \ifthenelse{\equal{#1}{}}{\mathrm{CCC}}{\mathrm{CCC}_#1}%
}
\newcommand{\1}{\mathds 1}
\newcommand{\tup}[1]{\langle#1\rangle}
\author{Asaf Karagila}
\thanks{The first author was supported by a UKRI Future Leaders Fellowship [MR/T021705/1].}
\address[Asaf Karagila]{}
\address{School of Mathematics,
    University of Leeds.
    Leeds, LS2~9JT, UK}
\email{karagila@math.huji.ac.il}
\urladdr{http://karagila.org}
\author{Noah Schweber}
\address[Noah Schweber]{}
\email{nschweber@proofschool.org}
\date{June 9, 2022}
\subjclass[2020]{Primary 03E25; Secondary 03E35}
\keywords{axiom of choice, countable chain condition, forcing, symmetric extensions}
\title{Choiceless Chain Conditions}
\begin{document}
\begin{abstract}
Chain conditions are one of the major tools used in the theory of forcing. We say that a partial order has the countable chain condition if every antichain (in the sense of forcing) is countable. Without the axiom of choice antichains tend to be of little use, for various reasons, and in this short note we study a number of conditions which in $\ZFC$ are equivalent to the countable chain condition.
\end{abstract}
\maketitle              
\section{Introduction}
Paul Cohen developed the method of forcing to prove that the Continuum Hypothesis is not provable from $\ZFC$. He did so by showing that one can add $\aleph_2$ new real numbers to a model of $\ZFC$, but in order to conclude that indeed the Continuum Hypothesis is false in the resulting model, Cohen needed to show that the ordinal which was $\omega_1$ in the ground model did not become countable when we added those new real numbers. In \cite{Cohen:1964} Cohen shows that any set of pairwise incompatible conditions (or \textit{antichain}) is countable, and then uses this lemma to conclude that if $\alpha$ and $\beta$ are two ordinals that are in bijection in the generic extension, then they must also be in bijection in the ground model as well.

This property, now known as the \textit{countable chain condition}, was generalised in several ways (both $\kappa$-c.c.\ as well as Axiom A and properness are, in some sense, generalisations). The preservation of cardinals and cofinalities is so deeply ingrained into the study of forcing that any new definition of a forcing will come with the study of its chain conditions, either directly by proving that no antichain can be of a certain size, or by showing that certain cardinals or cofinalities change in the generic extensions, and thus putting an upper bound on the chain condition of the forcing.

If we choose to study forcing over models of $\ZF$, however, this changes drastically. Despite the basic machinery of generic extensions remaining the same, antichains and closure conditions play a very different role. For one, ``every partial order contains a maximal antichain'' is itself equivalent to the axiom of choice over $\ZF$.\footnote{This equivalence, unlike ``every partial order contains a maximal chain'', requires the axiom of regularity. See Chapter~9 of \cite{Jech:AC} for details.} But antichains in partial orders are not the same as antichains in forcing, since ``incomparable'' and ``incompatible'' are very different notions in general. Arnold Miller, in his note \cite[p.~2]{Miller:2011}, remarks that while a certain partial order does not have maximal antichains, it is trivial from a forcing perspective. In the proof that ``every partial order contains a maximal antichain'' implies the axiom of choice, one resorts to fairly trivial partial orders as well, as far as forcing is concerned. These are forcings that when considering their separative quotients are just atomic forcings.

In this paper we study the notion of chain conditions in $\ZF$ in somewhat less trivial way. We focus on the countable chain condition in particular, since it is by far the most useful one. We consider three different versions, ranging from ``every maximal antichain is countable'' to ``every predense set contains a countable predense subset'', and we show that $\ZF+\DC$ will not prove any of the non-trivial implications between these versions. We also use two external definitions of the countable chain condition,\footnote{External in the sense that they require more than just the subsets of the forcing.} and we compare them against our three proposed ones.

We use this to argue that the strongest definition is the one that should be used, at least in the context of partial orders, as it is consistent with $\ZF+\DC$ that a partial order satisfies that every antichain is countable, but it collapses $\omega_1$. If we are willing to give up on $\DC$ and relax the version of countable chain condition we can also add $\sigma$-closure. In contrast, $\ZFC$ proves that a forcing which is both $\kappa$-c.c.\ and $\kappa$-closed is atomic. On the other hand, the strongest version of the countable chain condition cannot change cofinalities or collapse cardinals above $\omega_1$.

We also include yet another transfer theorem that allows us to ``clone'' a structure in the ground model in a way that allows us to control the subsets of the cloned version in a very precise manner. These sort of theorems are used often to prove theorems in $\ZF$.

At the end of this paper we include a list of open questions which we think are crucial for pushing further the study of forcing over models of $\ZF$, putting us one step closer to the goal of understanding how forcing changes the cardinal structure of models of $\ZF$.
\section{Preliminaries}
Suppose that $X$ and $Y$ are two sets, we want to look at permutations of $X\times Y$ which preserve $X$-sections. In other words, we want to consider permutations $\pi$ of $X\times Y$, such that whenever $y,z\in Y$, then $\pi(x,y)$ and $\pi(x,z)$ will appear in the same $X$ section, that is, if $\pi(x,y)=\tup{x_0,y'}$ and $\pi(x,z)=\tup{x_1,z'}$, then $x_0=x_1$. In this case, $\pi$ defines a permutation of $X$ by its action on the $X$ section, which we denote by $\pi^*$, and for every $x\in X$, $\pi_x$ is the permutation of $Y$ defined by $\pi_x(y)=z$ if and only if $\pi(x,y)=\tup{x',z}$ for some $x'\in X$. This leads us to the \textit{wreath product}. Let $G\subseteq\sym(X)$ and $H\subseteq\sym(Y)$ be two groups. The wreath product of $G$ and $H$, denoted by $G\wr H$, is the set of all permutations $\pi$ of $X\times Y$ which preserve $X$-section such that $\pi^*\in G$ and for all $x\in X$, $\pi_x\in H$.

We say that a set is \textit{well-orderable} if it can be well-ordered, in which case its cardinal is the least ordinal which is in bijection with the set. The cardinals of infinite well-orderable sets are the $\aleph$ numbers. If $X$ is not well-orderable, we define its cardinal, denoted by $|X|$, to be its \textit{Scott cardinal}, namely the set \[\{Y\in V_\alpha\mid\exists f\colon X\to Y\text{ a bijection}\},\] where $\alpha$ is the least ordinal for which the set is non-empty. We will use Greek letters such as $\kappa,\lambda$ and $\mu$ to denote $\aleph$ numbers exclusively.

The only weak versions of the axiom of choice which we will use in this work are variants of the Principle of Dependent Choice. For an $\aleph$ number $\lambda$ we define $\DC_\lambda$ to be the statement ``Every $\lambda$-closed tree has a maximal element or a chain of type $\lambda$'', and we define $\DC_{<\kappa}$ to mean $\forall\lambda<\kappa,\DC_\lambda$. If $\lambda=\omega$, then $\DC_\lambda$ is written simply as $\DC$.

\subsection{Forcing and antichains}
We say that $\PP$ is a \textit{notion of forcing}, or simply a forcing, if it is a preordered set with a maximum, $\1_\PP$. Throughout the paper it will often be beneficial to consider forcing notions with $\1$ removed from the partial order, but this will always be a temporary measure.

If $p,q\in\PP$, we say that $q$ \textit{extends} $p$, or that it is a \textit{stronger} condition if $q\leq_\PP p$. Two conditions $p$ and $q$ are \textit{compatible} (abbreviated as $p\compatible q$) if they have a common extensions, and otherwise they are incompatible (written as $p\incompatible q$).

We say that a subset $A\subseteq\PP$ is an \textit{antichain} if any two distinct conditions in $\PP$ are incompatible. Finally, we say that $D\subseteq\PP$ is \textit{predense} if for any $p\in\PP$ there is some $q\in D$ which is compatible with $p$. We note that an antichain is predense if and only if it is a maximal antichain.

For a subset $A$ of $\PP$, we denote by $A^\perp$ the set $\{p\in\PP\mid\forall q\in A,p\perp q\}$. Easily, $A\cup A^\perp$ is predense for any set $A$, and $A$ is predense if and only if $A^\perp=\varnothing$.

We say that a forcing $\PP$ satisfies the \textit{$\kappa$-chain condition} (or $\kappa$-c.c.), at least in the standard context of $\ZFC$, if given any subset of $\PP$ of size $\kappa$, at least two of its members are compatible. If $\kappa=\aleph_1$ we refer to it as the countable chain condition, or c.c.c. If $\PP$ is a c.c.c.\ forcing, then it does not change cofinalities, and in particular does not collapse cardinals.\footnote{This easily follows from \autoref{cor:ccc3-is-well-behaved} in the context of $\ZFC$.}

If $\{\dot x_i\mid i\in I\}$ is a family of $\PP$-names, usually a set, we want to make it into a name that is interpreted as the set of the interpretations. If $\1_\PP$ is present, this is easily done by defining $\{\dot x_i\mid i\in I\}^\bullet=\{\tup{\1_\PP,\dot x_i}\mid i\in I\}$. This notation extends naturally to ordered pairs, functions, sequences, etc. We also note that the usual naming scheme for ground model sets can be made more compact using this notation since $\check x=\{\check y\mid y\in x\}^\bullet$.

Finally, given a family of forcing notions, $\{\PP_i\mid i\in I\}$, we define the \textit{lottery sum} $\bigoplus_{i\in I}\PP_i$ as the forcing $\{\1\}\cup\{\{i\}\times\PP_i\}$ with the order given by $\tup{j,q}\leq\tup{i,p}$ if and only if $i=j$ and $q\leq_i p$, and of course $\1$ is the maximum. This can be viewed as an iteration of first choosing $i\in I$ with an atomic forcing, then forcing with that $\PP_i$. The idea is to let the generic filter choose which forcing we are using.

\subsection{Symmetric extensions}
Forcing is generally done in the context of $\ZFC$, in part because the usefulness of antichains, and since forcing cannot violate the axiom of choice, this method alone is not all that useful when trying to prove consistency results between $\ZF$ and $\ZFC$. We can extend this method to accommodate the failure of the axiom of choice in a rather controlled manner by introducing additional structure to the forcing. The additional structure allows us to identify a class of names which defines a model of $\ZF$ between the ground model and the generic extension.

Let $\PP$ be a notion of forcing. If $\pi$ is an automorphism of $\PP$, then $\pi$ acts on the $\PP$-names in the following recursive manner: \[\pi\dot x=\{\tup{\pi p,\pi\dot y}\mid\tup{p,\dot y}\in\dot x\}.\] The forcing relation, which is defined from the order of $\PP$ is also respected by this action as shown by the Symmetry Lemma.
\begin{lemma}\label{symmetry-lemma}
  Suppose that $p\in\PP$, $\dot x$ is a $\PP$-name, and $\pi\in\aut(\PP)$, then
  \[p\forces\varphi(\dot x)\iff\pi p\forces\varphi(\pi\dot x)\qed\]
\end{lemma}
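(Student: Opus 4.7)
The plan is to prove the Symmetry Lemma by induction on the syntactic complexity of $\varphi$, falling back on the recursive definition of the forcing relation rather than a semantic characterisation. Since we are working over $\ZF$ and want this to apply inside the ground model without appeal to actual generic filters (which may not exist), I would use Kunen-style forcing relations defined outright from the order of $\PP$.

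I would begin by recording a few purely algebraic observations about the action of $\pi$ on names. First, by the very definition $\pi\dot x=\set{\tup{\pi p,\pi\dot y}\mid\tup{p,\dot y}\in\dot x}$, the map $\dot x\mapsto\pi\dot x$ is a rank-preserving bijection on the class of $\PP$-names, with inverse given by $\pi^{-1}$. Second, the compatibility, incompatibility and extension relations are preserved by $\pi$: $q\leq p\iff\pi q\leq\pi p$, and analogously for $\compatible$ and $\incompatible$. Third, if $D\subseteq\PP$ is (pre)dense below $p$, then $\pi[D]$ is (pre)dense below $\pi p$.

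For the induction itself, the atomic cases ($\dot x\in\dot y$ and $\dot x=\dot y$) are handled by a simultaneous induction on the pair $(\rank\dot x,\rank\dot y)$, using whichever equivalent recursive characterisation of $\forces$ for atomic formulas one prefers. The point is that this definition only refers to the order of $\PP$, to membership in the names, and to conditions $r$ below $p$, all of which are transported faithfully to $\pi r$ below $\pi p$, together with names $\pi\dot z$ of strictly smaller rank covered by the inductive hypothesis. The Boolean case $\varphi\wedge\psi$ is immediate; the case $\neg\varphi$ follows from the definition $p\forces\neg\varphi(\dot x)\iff$ no $q\leq p$ forces $\varphi(\dot x)$, combined with the observation that $\pi$ bijects conditions below $p$ with conditions below $\pi p$ and the inductive hypothesis. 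For the existential case $\exists y\,\varphi(y,\dot x)$, one uses $p\forces\exists y\,\varphi(y,\dot x)\iff\{q\leq p\mid\exists\dot z\,(q\forces\varphi(\dot z,\dot x))\}$ is dense below $p$, and then again the fact that $\dot z\mapsto\pi\dot z$ is a bijection of the name class plus preservation of density.

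The main potential obstacle is notational rather than conceptual: one must be careful that the recursive clause used for atomic $\forces$ in the $\ZF$ setting is the genuine definition (based on the order of $\PP$ alone) and not a semantic one appealing to generic filters, since the latter would require absorbing the semantic content of the lemma into the definition. Once the atomic clauses are set up in this syntactic form, the induction essentially carries itself, because every ingredient of the definition of $\forces$ is invariant under automorphisms of $\PP$.
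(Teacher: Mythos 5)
Your proposal is correct: the paper states the Symmetry Lemma with no proof at all (the \verb|\qed| in the statement signals that it is being quoted as a standard fact, cf.\ Lemma~14.37 of \cite{Jech:ST2003}), and the argument you give --- induction on the complexity of $\varphi$ via the syntactic, order-theoretic definition of $\forces$, with the atomic cases handled by a subsidiary induction on name rank and everything transported by the rank-preserving bijection $\dot x\mapsto\pi\dot x$ and the order/density-preservation of $\pi$ --- is exactly the canonical proof one would supply. Your caveat about using the syntactic rather than the generic-filter characterisation of $\forces$ is well taken in the $\ZF$ context, and no step of the induction is missing.
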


Let $\sG$ be a fixed subgroup of $\aut(\PP)$. We say that $\sF$ is a \textit{filter of subgroups on $\sG$} if it is a filter on the lattice of subgroups. Namely, it is a non-empty collection of subgroups of $\sG$ which is closed under supergroups and finite intersections. We will also assume that the trivial group is not in $\sF$. We say that $\sF$ is a \textit{normal} filter of subgroups if whenever $H\in\sF$ and $\pi\in\sG$, then $\pi H\pi^{-1}\in\sF$ as well. We say that $\tup{\PP,\sG,\sF}$ is a \textit{symmetric system} if $\PP$ is a notion of forcing, $\sG$ is a group of automorphisms of $\PP$, and $\sF$ is a normal filter of subgroups on $\sG$.

We denote by $\sym_\sG(\dot x)$ the subgroup $\{\pi\in\sG\mid\pi\dot x=\dot x\}$, and we say that $\dot x$ is an \textit{$\sF$-symmetric} name when $\sym_\sG(\dot x)\in\sF$. If this notion holds hereditarily for all names which appear in the transitive closure of $\dot x$, then we say that $\dot x$ is a \textit{hereditarily $\sF$-symmetric} name. We denote by $\HS_\sF$ the class of all hereditarily $\sF$-symmetric names.

\begin{theorem*}[Lemma~15.51 in \cite{Jech:ST2003}]
Suppose that $\tup{\PP,\sG,\sF}$ is a symmetric system, and let $G\subseteq\PP$ be a $V$-generic filter. The class $\HS_\sF^G=\{\dot x^G\mid\dot x\in\HS_\sF\}$ is transitive class model of $\ZF$ which lies between $V$ and $V[G]$.
\end{theorem*}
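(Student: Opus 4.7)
The plan is to verify each axiom of $\ZF$ in $\HS_\sF^G$, with the Symmetry Lemma as the principal tool. I would first establish $V \subseteq \HS_\sF^G \subseteq V[G]$: the second inclusion is immediate from $\HS_\sF \subseteq V^\PP$, while for the first, an induction on rank shows every canonical name $\check x$ is fixed pointwise by every $\pi \in \sG$, so $\sym_\sG(\check x) = \sG \in \sF$. Transitivity of $\HS_\sF^G$ is built into the hereditary nature of $\HS_\sF$; from this, extensionality is automatic, foundation is inherited from $V[G]$, and infinity follows from $\check\omega \in \HS_\sF$.

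Pairing and union are handled by direct name constructions: for $\dot x, \dot y \in \HS_\sF$ the name $\{\dot x, \dot y\}^\bullet$ has symmetry group containing $\sym_\sG(\dot x) \cap \sym_\sG(\dot y) \in \sF$, and an analogous construction works for union. The substantive axioms are Separation, Replacement, and Power Set. For Separation, given $\dot x, \dot p_1, \dots, \dot p_n \in \HS_\sF$ and a formula $\varphi$, I would set
\[\dot y = \{\tup{p, \dot z} : \dot z \in \dom(\dot x),\ p \in \PP,\ p \forces \dot z \in \dot x \wedge \varphi^{\HS_\sF}(\dot z, \dot p_1, \dots, \dot p_n)\}\]
and let $K = \sym_\sG(\dot x) \cap \bigcap_i \sym_\sG(\dot p_i) \in \sF$. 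For $\pi \in K$, the Symmetry Lemma handles the forcing condition, $\pi$ bijects $\dom(\dot x)$ with itself since $\pi \dot x = \dot x$, and normality of $\sF$ ensures that $\pi$ sends $\HS_\sF$ to itself (because $\sym_\sG(\pi \dot w) = \pi \sym_\sG(\dot w) \pi^{-1} \in \sF$). Combined, these yield $\pi \dot y = \dot y$, so $\dot y \in \HS_\sF$. Replacement then follows by combining Separation with a Collection step in $V[G]$ that bounds the range within some $V_\alpha$, whose canonical name $\check V_\alpha$ is $\sG$-fixed.

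The main obstacle is Power Set, where the naive class of subset-names is a proper class. The trick is that any $\dot w \in \HS_\sF$ with $\dot w^G \subseteq \dot x^G$ admits a restricted version $\dot w' \in \HS_\sF$ with $\dom(\dot w') \subseteq \dom(\dot x)$ and $(\dot w')^G = \dot w^G$, obtained by sifting $\dom(\dot x)$ through the forcing relation. Then
\[\dot P = \{\tup{\1, \dot w'} : \dot w' \in \HS_\sF,\ \dom(\dot w') \subseteq \dom(\dot x),\ \rank(\dot w') < \beta\}\]
is a set for suitably large $\beta$; since $\sym_\sG(\dot x)$ permutes $\dom(\dot x)$ and preserves $\HS_\sF$, we have $\sym_\sG(\dot P) \supseteq \sym_\sG(\dot x) \in \sF$, so $\dot P \in \HS_\sF$ witnesses the power set in $\HS_\sF^G$.
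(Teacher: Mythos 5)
The paper does not prove this statement at all --- it is quoted verbatim from Jech (Lemma~15.51), so there is no internal argument to compare against; your proposal is essentially the standard textbook proof, and almost all of it is correct: the treatment of the inclusions, transitivity, the easy axioms, Separation (including the key observation that normality of $\sF$ makes $\HS_\sF$ invariant under $\sG$, so the Symmetry Lemma applies to formulas relativised to $\HS_\sF$), and Power Set via restriction of names to $\dom(\dot x)$ are all the right moves.

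The one step that fails as written is Replacement. You propose to bound the range of the class function inside some $V_\alpha$ ``whose canonical name $\check V_\alpha$ is $\sG$-fixed''. But $\check V_\alpha^G = V_\alpha^V$ contains only ground-model sets, whereas the values of a definable function on $\HS_\sF^G$ are typically interpretations of genuinely new symmetric names; no $\check V_\alpha$ can cover them. The correct bounding object is obtained on the name side: use Collection in $V$ to find an ordinal $\beta$ such that every required witness is the interpretation of some $\dot z\in\HS_\sF$ with $\rank(\dot z)<\beta$, and then take the name $\dot R=\{\tup{\1,\dot z}\mid \dot z\in\HS_\sF,\ \rank(\dot z)<\beta\}$. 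Since every $\pi\in\sG$ preserves both $\HS_\sF$ (by normality) and the rank of names, $\dot R$ is fixed by all of $\sG$, hence lies in $\HS_\sF$, and $\dot R^G$ contains the range; Separation then finishes the argument exactly as you intend. With that substitution the proof is complete.
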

The class $\HS_\sF^G$ in the theorem is also known as a \textit{symmetric extension}. To learn more about symmetric extensions in general, as well as their iterations we direct the reader to \cite{Grigorieff:1975}, and more recent papers such as \cite{Usuba:LS} and \cite{Karagila:2019}.

\section{Subset control}
We ultimately want to construct models of $\ZF$ where a certain partial order will fail to satisfy some definition of c.c.c.
\begin{definition}\label{def:sym-copy}
  Suppose that $M$ is a structure in some first-order language $\cL$ and $V\subseteq W\subseteq V[G]$ is a symmetric extension of $V$. A \textit{symmetric copy} of $M$ is a structure $N\in W$ such that $V[G]\models M\cong_\cL N$.\footnote{The definition in fact captures general logics, but we will only need it for first-order logic in this paper.}
\end{definition}

Our goal, therefore, is to define a partial order in the ground model, with a suitable group of automorphisms, and use these to define a symmetric extension in which there is a symmetric copy of our partial order whose subsets are exactly those that are fixed pointwise by a large group of automorphisms (with the appropriate definition of large, usually meaning ``fixing pointwise a small number of points'').

These sort of considerations are not new. Plotkin \cite{Plotkin:1969}, Hodges \cite{Hodges:1974}, and others already proved theorems of this sort before including many ad hoc versions, and if we want to be strict, we may also include results involving permutations models of $\ZFA$ along with the Jech--Sochor transfer theorem (see \cite[Ch.~6]{Jech:AC} for details). The first author has proved a similar theorem as well, in a more general setting that allows the preservation of $\DC_{<\kappa}$ under suitable conditions \cite{Karagila:DC}. For these reasons we will only outline the proof of the theorem.

Let $\cL$ be a first-order language and let $M$ be an $\cL$-structure. Given a group $\sG\subseteq\aut(M)$ and an ideal of subsets of $M$, $\cI$, we say that a subgroup of $\sG$ is large if it contains $\fix(A)=\{\pi\in\sG\mid \pi\restriction A=\id\}$ for some $A\in\cI$. We fix such $\cL,M,\sG$ and $\cI$. We will say that a $X\subseteq M$ is \textit{stable} under an automorphism $\pi$, if $\pi``X=X$, and we say it is stable under a group $H$ if it is stable under every $\pi\in H$.

\begin{theorem}\label{thm:transfer}
Let $\cL,M,\sG,\cI$ be as above. There is a symmetric extension of the universe in which there is a symmetric copy of $M$, $N$, such that every subset of $N^k$ in the symmetric extension is a symmetric copy of a subset of $M^k$ that is stable under a large group of automorphisms.

  Moreover, if $\cI$ is $\kappa$-complete, then we can assume that $\DC_{<\kappa}$ holds in the extension. We can also require that $M$ does not have any new subsets, although that is not relevant to the proof.
\end{theorem}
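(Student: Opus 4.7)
The plan is to clone $M$ by a $\kappa$-Cohen product indexed by $M$, wreathed with the internal symmetric group of $\kappa$, and to choose the filter $\sF$ so that the only subsets of the clone descending into the symmetric extension are the symmetric copies of stable subsets of $M^k$.

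Let $\PP=\Add(\kappa,M)$ consist of partial functions $p\colon M\times\kappa\to 2$ of size $<\kappa$, ordered by reverse inclusion. The automorphism group is $\sG^*=\sG\wr\sym(\kappa)$, acting on $M\times\kappa$ and lifting canonically to $\PP$; for $\pi\in\sG^*$ write $\pi^*\in\sG$ for its action on $M$ and $\pi_m\in\sym(\kappa)$ for its action on the $m$-th section. For each $m\in M$ let
\[\dot a_m=\{\tup{p,\check\alpha}\mid p(m,\alpha)=1\},\]
so that $\dot a_m^G\subseteq\kappa$ is the usual Cohen generic at coordinate $m$ and distinct coordinates yield distinct generics. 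The filter $\sF$ is generated by the subgroups
\[H_{A,A'}=\{\pi\in\sG^*\mid\pi^*\restriction A=\id\text{ and }\pi_m=\id\text{ for all }m\in A'\},\]
for $A\in\cI$ and $A'\in[M]^{<\kappa}$; normality is routine, since conjugation by $\rho\in\sG^*$ embeds $H_{A,A'}$ into $H_{\rho^*A,\rho^*A'}$.

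I would next define $N$ by taking $\dot N=\{\dot a_m\mid m\in M\}^\bullet$ as the underlying set and interpreting each $k$-ary relation $R\in\cL$ by the name built from $\{\tup{\dot a_{m_1},\ldots,\dot a_{m_k}}^\bullet\mid M\models R(m_1,\ldots,m_k)\}^\bullet$, with analogous clauses for function and constant symbols. These names all lie in $\HS_\sF$, since any $\pi\in\sG^*$ sends $\dot a_m$ to $\dot a_{\pi^*(m)}$, so $\sG^*$ merely permutes the defining lists while preserving the defining relations of $M$. The evaluation $m\mapsto\dot a_m^G$ is visibly an $\cL$-isomorphism $M\cong N$ in $V[G]$, so $N$ is a symmetric copy of $M$ in the sense of \autoref{def:sym-copy}.

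The main obstacle is the subset control step. Fix a hereditarily symmetric name $\dot X$ for a subset of $N^k$, and pick $A\in\cI$ and $A'\in[M]^{<\kappa}$ with $H_{A,A'}\subseteq\sym_{\sG^*}(\dot X)$. Define
\[Y=\{\tup{m_1,\ldots,m_k}\in M^k\mid\1\forces\tup{\dot a_{m_1},\ldots,\dot a_{m_k}}\in\dot X\}.\]
The key sublemma is that for every tuple the statement $\tup{\dot a_{m_1},\ldots,\dot a_{m_k}}\in\dot X$ is decided by $\1$: otherwise some $p$ forces membership and some incompatible $q$ forces non-membership, and by exploiting the freedom of $H_{A\cup\{m_1,\ldots,m_k\},A'\cup\{m_1,\ldots,m_k\}}$ on coordinates outside those sets, together with the $<\kappa$-size of the supports of $p$ and $q$, one produces a $\sigma$ in this subgroup with $\sigma(p)$ compatible with $q$; such $\sigma$ fixes $\dot X$ and every $\dot a_{m_i}$, so the Symmetry Lemma yields a contradiction. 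Hence $Y\in V$ and $\dot X^G$ is the image of $Y$ under $m\mapsto\dot a_m^G$. Stability of $Y$ under $\fix_\sG(A)$ is then immediate: any $\pi\in\fix_\sG(A)$ lifts to $\hat\pi\in H_{A,A'}$ with $\hat\pi^*=\pi$, and the Symmetry Lemma gives $\tup{m_1,\ldots,m_k}\in Y\iff\tup{\pi(m_1),\ldots,\pi(m_k)}\in Y$.

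For the moreover clause, $\PP$ is $\kappa$-closed thanks to the $<\kappa$-support, and $\kappa$-completeness of $\cI$ together with the fact that $[M]^{<\kappa}$ is closed under $<\kappa$-unions makes $\sF$ closed under $<\kappa$-intersections, so the framework of \cite{Karagila:DC} yields $\DC_{<\kappa}$ in the symmetric extension. That $M$ itself gains no new subsets follows by applying the same density argument to a hereditarily symmetric name for a subset of $M$: for each $m\in M$ the name's symmetry group contains enough automorphisms fixing $\check m$ rigidly (all of $\sym(\kappa)$ at every coordinate, plus $\fix_\sG(A\cup\{m\})$) to force each membership decision down to $\1$, making the interpreted subset ground model.
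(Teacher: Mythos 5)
There is a genuine gap, and it is located exactly where you say the ``main obstacle'' is. Your key sublemma --- that $\1$ decides $\tup{\dot a_{m_1},\dots,\dot a_{m_k}}\in\dot X$ for every tuple --- is false in your construction. The automorphism $\sigma$ you need must lie in $H_{A\cup\{\vec m\},A'\cup\{\vec m\}}$, so it is frozen on every section indexed by $A'$; if $p$ and $q$ disagree at a coordinate $(m,\alpha)$ with $m\in A'$, no such $\sigma$ can make $\sigma(p)$ compatible with $q$. Concretely, take $\dot X=\{\tup{p,\dot a_{m_0}}\mid p(m_0,0)=1\}$: one checks $H_{\{m_0\},\{m_0\}}\subseteq\sym(\dot X)$, so $\dot X\in\HS$, yet $\{(m_0,0)\mapsto 1\}$ forces $\dot a_{m_0}\in\dot X$ while $\{(m_0,0)\mapsto 0\}$ forces the opposite. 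Consequently your set $Y$ does not compute $\dot X^G$, and the stronger conclusion you draw (every subset of $N^k$ in the extension is the image of a \emph{ground-model} subset of $M^k$) is simply not what the theorem asserts --- the theorem only claims stability under a large group, and the paper's subsequent remark explicitly allows $X_*$ to be a new subset of $M^k$. The same defect breaks your last paragraph: a name such as $\{\tup{p,\check m}\mid p(m_0,j(m))=1\}$, for $j$ an injection of a $\leq\kappa$-sized piece of $M$ into $\kappa$, is symmetric with support $(\{m_0\},\{m_0\})$ and names a genuinely new subset of $M$.

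The root cause is the flat indexing $\PP=\Add(\kappa,M)$. Because $\dot a_m$ is a single Cohen set, $\pi\dot a_m=\dot a_m$ forces you to put the clause ``$\pi_m=\id$ for $m\in A'$'' into the filter, and that clause is what kills homogeneity on the support. The paper's construction uses $\PP=\Add(\lambda,M\times\lambda)$ and sets $\dot a_m=\{\dot x_{m,\alpha}\mid\alpha<\lambda\}^\bullet$: since $\dot a_m$ is the \emph{unordered} family of all $\lambda$-many generics in the $m$-th section, it is fixed by every $\pi$ with $\pi^*m=m$ no matter how $\pi$ scrambles the $\alpha$-indices. The filter then only needs $\fix(A\times B)$ with $B\in[\lambda]^{<\lambda}$, and the homogeneity lemma (shuffle the $\alpha$-coordinates to separate $\dom p$ from $\dom q$) produces a $\tau$ with $\tau^*=\id$ that fixes \emph{every} $\dot a_m$ and every symmetric $\dot X$; this is what lets one prove stability under $\fix(A)$ rather than $\1$-decidedness. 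If you want to repair your argument, you must either import this second level of indexing or restrict to $\lambda=|M|^+$ (where the forcing is $|M|^+$-closed and genuinely adds no subsets of $M$), which is exactly the optional ``moreover'' case of the theorem, not the general one.
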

\begin{proof}[Outline of Proof]
  We will only outline the case for the case of $k=1$. Let $\lambda\geq\omega$ be your favourite regular cardinal (e.g., $\omega$ if we do not wish to collapse cardinals without additional assumptions such as $\GCH$; or $|M|^+$ if we wish to not add new subsets of $M$), and consider the forcing $\PP=\Add(\lambda,M\times\lambda)$ whose conditions are partial functions $p\colon M\times\lambda\times\lambda\to 2$ such that $|p|<\lambda$).

  We let $\sG\wr\sym(\lambda)$ act on $\PP$ in the natural way: $\pi p(\pi(m,\alpha),\beta)=p(m,\alpha,\beta)$, and let $\sF$ be the filter of subgroups $\{\fix(A\times B)\mid A\in\cI, B\in[\lambda]^{<\lambda}\}$.

  For $m\in M,\alpha<\lambda$ let
  \begin{enumerate}
  \item $\dot x_{m,\alpha}=\{\tup{p,\check\beta}\mid p(m,\alpha,\beta)=1\}$,
  \item $\dot a_m=\{\dot x_{m,\alpha}\mid\alpha<\lambda\}^\bullet$, and
  \item $\dot N=\{\dot a_m\mid m\in M\}^\bullet$, we define similar names the interpretation of $\cL$. E.g., if $\cL$ has a partial order symbol $\leq$, then we define $\dot\leq=\{\tup{a_m,a_n}^\bullet\mid m\leq n\}^\bullet$.
  \end{enumerate}
  It is easy to see that this is indeed defining a symmetric copy of $M$. We say that a subset of $N$ is stable under $\pi$ if in the full generic extension, where we have the natural identification $m\mapsto x_m$, it is stable under the transport of $\pi$ to $N$. In other words, if $p\forces\dot X\subseteq\dot N$, then $p$ forces that $\dot X$ is stable under $\pi$ if $p\forces\dot a_m\in\dot X\leftrightarrow\dot a_{\pi^*m}\in\dot X$ for all $m\in M$.\footnote{Recall that $\pi^*$ is the projection of $\pi$ to $\sG$.} So in the generic extension, we can transfer the notion of ``stable'' to $N$, and so $X$ is stable under a large group of automorphisms if there is some $A\in\cI$ such that $X$ is stable under $\fix(A)$.

  The following lemma is the key technical part of the proof:

  \begin{lemma}\label{lemma:homogeneity}
    Given any two conditions $p,q$ there is a permutation $\pi$ such that $\pi^*=\id$ and $\pi p$ is compatible with $q$.
  \end{lemma}
  \begin{proof}[Proof of Lemma]
    Let $A\subseteq M$ be the set of $m\in M$ such that $p(m,\alpha,\beta)\neq q(m,\alpha,\beta)$ for some $\alpha,\beta<\lambda$.

    Then for each $m\in A$ we have that $|\{\alpha<\lambda\mid \exists\beta\, \tup{m,\alpha,\beta}\in\dom p\cup\dom q\}|<\lambda$. Therefore there is a permutation of $\lambda$, $\pi_m$ such that \[\{\pi_m\alpha\mid\exists\beta\, \tup{m,\alpha,\beta}\in\dom p\}\cap\{\alpha\mid\exists\beta\,\tup{m,\alpha,\beta}\in\dom q\}=\varnothing.\] We can now take $\pi$ to be $\pi^*=\id$ and $\pi_m$ to be our chosen permutation as needed, then $\dom\pi p\cap\dom q=\varnothing$. This construction can be modified to simply move the coordinates which are in disagreement, or to preserve any small subset of the domain if necessary.
  \end{proof}

  Suppose now that $\dot X\in\HS$ and $p\forces\dot X\subseteq\dot N$, and let $A\times B$ be such that $\fix(A\times B)\subseteq\sym(\dot X)$. By the homogeneity lemma above, we may assume that $\dom p\subseteq A\times B\times\lambda$, since we are only interested in statements of the form $\dot a_m\in\dot X$ and their negation. Suppose now that $q\leq p$ and $q\forces\dot a_m\in\dot X$, if we show that for every $\pi\in\fix(A\times B)$, $q\forces\pi\dot a_m\in\dot X$, then we are done since $\pi\dot a_m=\dot a_{\pi^*m}$, and that would mean that $p$ forces that $\dot X$ is stable under $\fix(A)$.

  Let $\pi\in\fix(A\times B)$, then $\pi q\forces\pi\dot a_m\in\pi\dot X$, and since $\pi\dot X=\dot X$ we can omit it from that part. Moreover, by the lemma above we can find $\tau\in\fix(A\times B)$ such that $\tau^*=\id$ and $\tau\pi q$ is compatible with $q$. But now $\tau\pi q\forces\tau\pi\dot a_m\in\dot X$, and since $\tau\pi\dot a_m=\dot a_{\tau^*\pi^*m}=\dot a_{\pi^*m}=\pi\dot a_m$ we get that $\tau\pi q\forces\pi\dot a_m\in\dot X$. Since $q$ was an arbitrary extension of $p$, this holds for every extension of $q$ itself. Therefore no extension of $q$ can force $\pi\dot a_m\notin\dot X$, so $q\forces\pi\dot a_m\in\dot X$ as wanted.

  Finally, if $\cI$ is $\kappa$-closed, then by choosing $\lambda\geq\kappa$ we have that $\sF$ is $\kappa$-complete and $\PP$ is $\kappa$-closed and therefore by \cite{Karagila:DC} we preserve $\DC_{<\kappa}$.\footnote{We can be more careful in our construction to even allow $\lambda=\omega$ while still preserving $\DC_{<\kappa}$. But since we will not need that much sophistication in our use of this theorem it is easier to make the assumption that $\lambda\geq\kappa$.} And indeed, if we wish for $M$ to not have new subsets we only need to take $\lambda=|M|^+$.\end{proof}

\begin{remark}
  The proof above raises the obvious question: if $X\subseteq N$ is in the generic extension and it is stable under a large group of automorphisms, is it in the symmetric extension? In the case where $X$ is a copy of a subset of $M$ in the ground model the answer is easily positive: $\{\dot a_m\mid m\in X_*\}^\bullet$, where $X_*\subseteq M$ is the relevant subset, is a symmetric name if and only if $X_*$ is stable under a large group of automorphisms.

  Therefore, in the case where no new subsets of $M$ are added this question receives a very easy positive answer. In the case where new subsets of $M$ are added we run into the problem that if $\dot X$ is a name for a subset of $N$, then $\pi\dot X$ is not necessarily the same as $\pi``X$ (understood as the transport of $\pi$ in the generic extension). The proof requires us to define a name in $\HS$ and show that it is forced to be equal to $\dot X$, but in order to do so we need either equality between $\pi\dot X$ and $\pi``X$, or at the very least that $\dot X$ is symmetric to begin with, in which case there's nothing to do anyway.

  Under additional assumptions of homogeneity on $M$ we can indeed get these sort of equivalences but finding the exact condition seems like a result beyond the scope that is necessary here. And so we decide to add one more piece of wood into the roaring fire of ad-hoc folklore theorems instead.
\end{remark}

\section{What do you mean by ``a chain condition''?}
\begin{definition}
  Let $\PP$ be a notion of forcing. We define the following properties:
  \begin{description}
  \item[$\ccc1$] Every maximal antichain in $\PP$ is countable.
  \item[$\ccc2$] Every antichain in $\PP$ is countable.\footnote{Equivalently, every uncountable subset of $\PP$ contains two compatible conditions.}
  \item[$\ccc3$] Every predense subset of $\PP$ contains a countable predense subset.
  \end{description}
\end{definition}
When we write $\ccc{i}\to\ccc{j}$, we mean that for every notion of forcing $\PP$, if $\ccc{i}(\PP)$ holds, then $\ccc{j}(\PP)$ holds. Likewise, $\ccc{i}\nto\ccc{j}$ means that there exists a notion of forcing for which the implication does not hold.

Assuming $\ZFC$, all the variants mentioned above are equivalent and the ``standard definition'' is $\ccc2$.
\begin{proposition}\label{prop:zf-implications}
  The following implications are provable in $\ZF$:
  \[\ccc3\to\ccc2\to\ccc1.\]
\end{proposition}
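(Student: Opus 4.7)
The plan is to handle the two implications separately, both by direct arguments that do not appeal to any form of choice.

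For $\ccc2\to\ccc1$, the argument is immediate: any maximal antichain is in particular an antichain, so if every antichain is countable, then every maximal antichain is countable. Nothing to do here.

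For $\ccc3\to\ccc2$, I would proceed as follows. Let $A\subseteq\PP$ be an antichain. As noted in the preliminaries, $A\cup A^\perp$ is always predense. Applying $\ccc3$, pick a countable predense $D\subseteq A\cup A^\perp$. I claim $A\subseteq D$, which immediately gives that $A$ is countable. To verify the claim, fix $a\in A$; by predensity of $D$, there is some $d\in D$ with $d\compatible a$. Since $D\subseteq A\cup A^\perp$, either $d\in A^\perp$ (impossible, since then $d\incompatible a$) or $d\in A$. In the latter case, two compatible members of the antichain $A$ must coincide, so $d=a$, whence $a\in D$.

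The key point to emphasize is that no choice is invoked: we do not select a witness $d$ for each $a$ and then apply a choice principle to the resulting family, because the witnesses are forced to equal $a$ itself, so the argument produces the inclusion $A\subseteq D$ outright. There is no real obstacle here; both implications are essentially bookkeeping, and the slight subtlety is only noticing that $A\cup A^\perp$ is the right predense set to feed into $\ccc3$ and that the choice-free extraction works because incompatibility inside an antichain forces equality.
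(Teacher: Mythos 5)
Your proposal is correct and follows exactly the paper's argument: $\ccc2\to\ccc1$ is immediate, and for $\ccc3\to\ccc2$ one feeds the predense set $A\cup A^\perp$ into $\ccc3$ and observes that the resulting countable predense subset must contain all of $A$. Your write-up merely spells out the final inclusion in slightly more detail (compatibility within an antichain forces equality), which is the same observation the paper makes.
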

\begin{proof}
  $\ccc3\to\ccc2$: Let $A$ be an antichain, and let $D=A\cup A^\perp$. Since $D$ is predense, it has a countable predense subset, $D'$. Because every condition in $D$ is either incompatible with all conditions in $A$ or it belongs to $A$, it must be that $A\subseteq D'$. Therefore $A$ is countable. $\ccc2\to\ccc1$ is trivial.
\end{proof}
\begin{proposition}\label{prop:ccc2-to-ccc3-under-strong-dc}
If $\DC_{\omega_1}$ holds, then $\ccc2\to\ccc3$.
\end{proposition}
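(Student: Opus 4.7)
The plan is to emulate the \ZFC\ argument by constructing a maximal antichain via transfinite recursion of length at most $\omega_1$, where $\ccc2$ forces the recursion to terminate at a countable stage and $\DC_{\omega_1}$ justifies the recursion itself. Fix a forcing $\PP$ satisfying $\ccc2$ and a predense $D\subseteq\PP$.

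First, observe that $\PP_D=\{p\in\PP\mid\exists d\in D,\,p\leq d\}$ is dense in $\PP$: given any $p\in\PP$, predensity of $D$ supplies some $d\in D$ compatible with $p$, and any common extension lies in $\PP_D$. Consequently, a maximal antichain in $\PP_D$ is also a maximal antichain (and therefore predense) in $\PP$, since a condition incompatible with every element of such an antichain would have no extension in $\PP_D$.

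To build such a maximal antichain, let $T$ be the tree of injective sequences $\tup{a_\alpha\midd\alpha<\delta}$ indexed by some $\delta<\omega_1$ whose range forms an antichain in $\PP_D$, ordered by end-extension. The regularity of $\omega_1$ shows $T$ is $\omega_1$-closed: the union of a chain in $T$ of length $<\omega_1$ is a sequence of length $<\omega_1$ whose range is still an antichain in $\PP_D$. Applying $\DC_{\omega_1}$, either $T$ has a chain of order type $\omega_1$---whose union is an antichain in $\PP$ of cardinality $\omega_1$, contradicting $\ccc2$---or $T$ has a maximal node. The second alternative yields a countable maximal antichain $A\subseteq\PP_D$, which by the previous paragraph is also a maximal antichain in $\PP$.

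Since $\DC_{\omega_1}$ implies countable choice, we may pick for each $a\in A$ a witness $d_a\in D$ with $a\leq d_a$, and set $D'=\{d_a\mid a\in A\}$. For any $p\in\PP$, predensity of $A$ in $\PP$ produces some $a\in A$ compatible with $p$; a common extension of $a$ and $p$ also extends $d_a$, so $p\compatible d_a$. Thus $D'$ is a countable predense subset of $D$, establishing $\ccc3$. The main subtle point is setting up the recursion as a genuine $\omega_1$-closed tree so that both clauses of the $\DC_{\omega_1}$-dichotomy yield usable information; everything else is a routine adaptation of the Zorn's lemma argument, now powered by $\DC_{\omega_1}$ together with $\ccc2$.
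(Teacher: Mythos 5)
Your proof is correct and follows essentially the same route as the paper: build a maximal antichain inside the downward closure of $D$ by a transfinite recursion of length at most $\omega_1$ (which $\ccc2$ forces to halt at a countable stage), then use the countable choice available from $\DC_{\omega_1}$ to select witnesses in $D$ above each antichain element. Your packaging of the recursion as an $\omega_1$-closed tree of injective sequences, so that both horns of the $\DC_{\omega_1}$ dichotomy are used explicitly, and your handling of the general predense case directly via the downward closure rather than treating the dense-open case first, are minor streamlinings of the paper's argument rather than a different approach.
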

\begin{proof}
  Suppose that $\PP$ is a $\ccc2$ forcing notion and $D$ is a predense set, we will find a countable $D_0\subseteq D$ which is also predense. We will begin with the case that $D$ is in fact a dense open set. In this case, we recursively construct an increasing sequence of antichains, $A_\alpha$. Taking $A_0=\varnothing$ and $A_\alpha=\bigcup_{\beta<\alpha}A_\beta$ when $\alpha$ is a limit ordinal. In the successor case, simply choose a point $p\in D\cap A_\alpha^\perp$ if such $p$ exists and let $A_{\alpha+1}=A_\alpha\cup\{p\}$. If no such $p$ exists, then $A_\alpha$ is a maximal antichain, since $D$ was dense, and we simply let $A_{\alpha+1}=A_\alpha$.

  Using $\DC_{\omega_1}$, the process can continue to define $A_\alpha$ for $\alpha<\omega_1$ and so $A=\bigcup_{\alpha<\omega_1}A_\alpha$ is an antichain in $\PP$ which is contained in $D$. But since $\PP$ was $\ccc2$, it must be that $A$ is countable, so the process must have stabilised at some stage and therefore $A$ is maximal and is predense.

  Assume now that $D$ is not a dense open set. We can repeat the same process for its downwards closure, that is $\{q\in\PP\mid\exists p\in D: q\leq p\}$ to find a countable and maximal antichain $A$. Now, using $\DC_{\omega_1}$ we can choose for each $q\in A$ some $p_q\in D$ such that $q\leq p_q$, and such $p_q$ exists by the fact we took $A$ out of the downwards closure of $D$. Indeed, we may take up to countably many points for any such $q\in A$. Let $D_0$ be the countable set $\{p_q\mid q\in A\}$, then given any $p\in\PP$, there is some $q\in A$ such that $q$ and $p$ are compatible, and therefore $p_q$ is compatible with $p$ as well, so $D_0$ is predense as wanted.
\end{proof}
\begin{mtheorem}
No implication in \autoref{prop:zf-implications} is reversible in $\ZF+\DC$.
\end{mtheorem}
Each of these reverse implications requires a separate consideration. We will therefore prove them separately in the remainder of this section. We will rely, heavily, on \autoref{thm:transfer} and so it is enough to describe a separative partial order, a group of automorphisms, and an ideal of sets.
  \begin{proposition}\label{thm:ccc1}
$\ccc1\nto\ccc2$.
\end{proposition}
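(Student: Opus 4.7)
The plan is to apply \autoref{thm:transfer} to the following setup. I let $M = \{\1\} \cup A \cup Q$, where $A = \{a_\alpha : \alpha < \omega_1\}$ is a pairwise-incompatible antichain and $Q = \{q_{\beta, s} : \beta < \omega_1,\ s \in [\omega]^{<\omega}\setminus\{\varnothing\}\}$ is partitioned into $\omega_1$ ``columns'' indexed by $\beta$. Within a single column I declare $q_{\beta, s} \leq q_{\beta, s'}$ iff $s \supseteq s'$, so any two conditions in a column are compatible (via $q_{\beta, s \cup s'}$); conditions in distinct columns, and conditions of $A$ versus those of $Q$, are mutually incompatible, with $\1$ on top. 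I take $\sG = \sym(\omega_1) \times \bigl(\sym(\omega_1) \wr \sym(\omega)\bigr)$ acting naturally: the first factor permutes the $\alpha$-indices of $A$, while the second permutes the $\beta$-columns of $Q$ and, via the wreath product, the $\omega$-elements within each column. A typical element will be denoted $(\sigma, \tau, (\rho_\beta)_{\beta < \omega_1})$. For the ideal, I let $\cI$ be the ideal of countable subsets of $M$; this is $\omega_1$-complete, so $\DC$ will hold in the symmetric extension.

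By \autoref{thm:transfer}, the symmetric extension will contain a symmetric copy $\PP$ of $M$. Since $A$ is $\sG$-invariant it is in particular stable, so its symmetric copy is an uncountable antichain of $\PP$, witnessing the failure of $\ccc2(\PP)$. For $\ccc1(\PP)$, I need to show that every stable maximal antichain of $M$ is countable. I first observe that, aside from the trivial antichain $\{\1\}$, any maximal antichain $X$ of $M$ must contain all of $A$ (since each $a_\alpha$ is compatible in $M$ only with itself and $\1$), together with a ``selector'' $\{q_{\beta, t_\beta} : \beta < \omega_1\}$ picking exactly one condition per column of $Q$ (since within each column any two conditions are compatible).

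The crux is that no such selector can be stable. Given any $E \in \cI$, I take $\pi = (\id, \id, (\rho_\beta)_\beta) \in \fix(E)$ with $\rho_\beta$ chosen arbitrarily in $\sym(\omega)$ for every $\beta$ outside the (countable) $\beta$-projection of $E$. Stability of the selector under all such $\pi$ would force $\rho_\beta(t_\beta) = t_\beta$ for every $\rho_\beta \in \sym(\omega)$ and every such $\beta$, making $t_\beta$ a $\sym(\omega)$-invariant subset of $\omega$ --- which is impossible for a nonempty finite $t_\beta$. Hence $\{\1\}$ is the only stable maximal antichain of $M$, and it is countable, so $\ccc1(\PP)$ holds. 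The hard part will be the stability analysis of selectors, which ultimately reduces to the elementary fact that $\sym(\omega)$ has no nonempty finite invariant subset of $\omega$.
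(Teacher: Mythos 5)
Your argument is internally sound and does establish the literal statement: every nontrivial maximal antichain of your $M$ must contain, besides $A$, a selector picking one nonempty finite set per column; no selector is stable under any $\fix(E)$ with $E$ countable (your reduction to the fact that $\sym(\omega)$ has no nonempty finite invariant subset is correct); so $\{\1\}$ is the only maximal antichain surviving into the symmetric extension, while the $\sG$-invariant antichain $A$ survives and is uncountable. This is a genuinely different construction from the paper's, which takes $\PP=\bigoplus_{\alpha<\omega_2}\QQ_\alpha$ with $\QQ$ the Cohen algebra minus its top and stabilisers of sets of size $\aleph_1$: there, nontrivial maximal antichains must meet all $\aleph_2$ summands and are therefore too large to be stable (by homogeneity of $\QQ$), while a stable antichain of size $\aleph_1$ is obtained by fixing one point in each of $\aleph_1$ summands.

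The difference is not in your favour, however. Each column of $Q$ is downward directed, so your preorder is not separative --- note the sentence immediately preceding these propositions, that ``it is enough to describe a \emph{separative} partial order''. The separative quotient of your $M$ collapses each column to a single atom, and the resulting atomic forcing has a definable (hence symmetric) uncountable maximal antichain, namely its set of atoms. In other words, your $M$ is $\ccc1$ only by virtue of non-separativity, and $\ccc1$ is lost as soon as one passes to the separative quotient or the Boolean completion. This is exactly the ``trivial from a forcing perspective'' phenomenon, discussed in the introduction via Miller's example, that the paper sets out to avoid, and it is why the paper's counterexample is built from a separative order in which the $\ccc1$/$\ccc2$ gap reflects genuine forcing content. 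To meet the standard the paper sets for itself, you would need to redesign the example so that the separative quotient is still a counterexample.
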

\begin{proof}
  Let $\QQ$ denote the Boolean completion of the Cohen forcing with its maximum element removed, and let $\PP$ be $\bigoplus_{\alpha<\omega_2}\QQ_\alpha$ with $\QQ_\alpha=\QQ$. Note that $\PP$ is a homogeneous forcing, given two conditions we can first permute $\omega_2$ to make sure that they come from the same index, and then apply the necessary automorphism of $\QQ$ making the two conditions compatible within that copy.

  To apply \autoref{thm:transfer} we consider $\PP$ with its full automorphism group and pointwise stabilisers of sets of size $\aleph_1$. If $A$ is a maximal antichain in $\PP$, then $A$ must be of size $\aleph_2$ or trivial, since if $\1_\PP\notin A$, then $A$ must meet each $\QQ_\alpha$, and therefore it has size $\aleph_2$. On the other hand, if an antichain has size $\aleph_2$, then it is not stable under any $\fix(E)$ for some $E$ of size $\aleph_1$. Let $A$ be such antichain, then since no $\QQ_\alpha$ has a maximum element of its own, there is some large enough $\alpha$ such that $\QQ_\alpha\cap A$ is non-empty and $\QQ_\alpha\cap E=\varnothing$. In that case, simply apply some automorphism which moves an element of $A\cap\QQ_\alpha$ to an element of $\QQ_\alpha\setminus A$, and such automorphism exists since $\QQ$ is homogeneous.

  Therefore the symmetric copy of $\PP$ is such where the only maximal antichain is trivial, but antichains of size $\aleph_1$ exists. And so we have a forcing that is $\ccc1$ but not $\ccc2$.
\end{proof}
\begin{proposition}\label{thm:ccc2}
$\ccc2\nto\ccc3$
\end{proposition}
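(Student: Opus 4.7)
The plan is to apply \autoref{thm:transfer} to a carefully chosen partial order. For each $\alpha<\omega_1$ let $\QQ_\alpha=(\mathbb{Q},\leq)$, the rationals under their standard order, viewed as a partial order in which any two elements are compatible (witnessed by any smaller rational) and whose automorphism group $\aut(\mathbb{Q})$ acts transitively on $\mathbb{Q}$. Set $M=\bigoplus_{\alpha<\omega_1}\QQ_\alpha$, let $\sG=\sym(\omega_1)\wr\aut(\mathbb{Q})$ act on $M$ in the natural way (fixing $\1_M$), and let $\cI$ be the ideal of countable subsets of $M$, which is $\aleph_1$-complete. By \autoref{thm:transfer} we obtain a symmetric extension $W\models\ZF+\DC$ with a symmetric copy $N$ of $M$ such that every subset of $N$ in $W$ arises as the symmetric copy of a subset $X^*\subseteq M$ in $V$ stable under $\fix(E)$ for some countable $E\subseteq M$. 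The construction can be arranged so as to preserve $\aleph_1$ (e.g., by starting in a model of $\GCH$ and taking $\lambda=\omega_1$ in the transfer theorem).

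To verify that $N$ satisfies $\ccc2$ in $W$, let $A\subseteq N$ be an antichain in $W$, with corresponding antichain $A^*\subseteq M$ in $V$ stable under $\fix(E)$ for some countable $E$. Set $E_0=\{\alpha:E\cap(\{\alpha\}\times\mathbb{Q})\neq\varnothing\}$, which is countable. For any $\alpha\notin E_0$, the permutations $\pi\in\sG$ with $\pi^*=\id$ and $\pi_\alpha\in\aut(\mathbb{Q})$ arbitrary (identity on other coordinates) all belong to $\fix(E)$. Thus the slice $A^*\cap(\{\alpha\}\times\mathbb{Q})$ is stable under $\aut(\mathbb{Q})$, and by transitivity is either $\varnothing$ or all of $\{\alpha\}\times\mathbb{Q}$. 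The latter is impossible because $\mathbb{Q}$ contains compatible elements, contradicting that $A^*$ is an antichain. Hence $A^*\subseteq\{\1_M\}\cup\bigcup_{\alpha\in E_0}(\{\alpha\}\times\mathbb{Q})$ is countable in $V$, so $A$ is countable in $W$ by cardinal preservation.

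To see that $\ccc3$ fails in $N$, take $D^*=M\setminus\{\1_M\}$, which is stable under $\sG$ and predense in $M$; its symmetric copy $D\subseteq N$ is then predense in $N$. Suppose toward contradiction that $D_0\subseteq D$ is a countable predense subset in $W$, with corresponding $D_0^*\subseteq D^*$ in $V$ predense and stable under $\fix(E)$ for some countable $E$. The same orbit argument yields that $D_0^*\cap(\{\alpha\}\times\mathbb{Q})$ is $\aut(\mathbb{Q})$-stable for $\alpha\notin E_0$, hence $\varnothing$ or $\{\alpha\}\times\mathbb{Q}$; predensity of $D_0^*$ in $M$ forces each such slice to be nonempty, since conditions in distinct copies of $\mathbb{Q}$ are pairwise incompatible in the lottery sum. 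Therefore $D_0^*\supseteq\bigcup_{\alpha\notin E_0}(\{\alpha\}\times\mathbb{Q})$ has $V$-cardinality $\aleph_1$, so by cardinal preservation $D_0$ is uncountable in $W$, contradicting its countability.

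The main subtlety is the back-and-forth between countability of a set in $W$ and the $V$-cardinality of its stable preimage in $M$; this is handled by ensuring that the forcing of \autoref{thm:transfer} preserves $\aleph_1$.
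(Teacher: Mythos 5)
Your construction is essentially the paper's: the same lottery sum $\bigoplus_{\alpha<\omega_1}\QQ_\alpha$ of homogeneous c.c.c.\ orders without maxima, symmetrised via coordinatewise automorphisms over the ideal of countable sets, with $D=\PP\setminus\{\1\}$ as the predense set that cannot be shrunk. Instantiating $\QQ$ as $(\mathbb{Q},\leq)$ is a tidy choice: every slice of an antichain is automatically a singleton, and the countably many ``active'' summands together form a set that is countable already in $V$, so you sidestep the paper's extra stipulation that $\QQ$ acquire no new subsets (needed there to keep each summand c.c.c.\ in the extension). The $\ccc3$ half, the appeal to \autoref{thm:transfer}, and the orbit argument are all fine; enlarging the group to $\sym(\omega_1)\wr\aut(\mathbb{Q})$ is harmless since you only ever use automorphisms fixing the index.

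There is one inference that does not work as stated: in the $\ccc2$ verification you conclude that $A$ is countable \emph{in $W$} ``by cardinal preservation''. Preservation of $\aleph_1$ between $V$ and $V[G]$ gives the implication in the direction you use it in the $\ccc3$ half (countable in $W$ implies countable in $V[G]$, contradicting $|D_0^*|=\aleph_1$ in $V$), but it cannot push countability \emph{down} into the intermediate model: a set can be countable in $V[G]$ while admitting no enumeration in the symmetric submodel (the set of Cohen reals in the basic Cohen model is the standard example). What you need is a counting of $A$ that lives in $W$, and it is available: $A$ is contained in the symmetric copy $\hat S$ of $S=\{\1_M\}\cup\bigcup_{\alpha\in E_0}(\{\alpha\}\times\mathbb{Q})$, and since $S\in\cI$ is countable in $V$ with enumeration $\tup{s_n\mid n<\omega}$, the name $\{\tup{\check n,\dot a_{s_n}}^\bullet\mid n<\omega\}^\bullet$ is fixed by $\fix(S\times B)$ for any nonempty small $B$ and hence lies in $\HS$; so $\hat S$, and with it its subset $A$, is countable in $W$. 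With that repair the argument is complete.
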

\begin{proof}
  Let $\QQ$ denote a homogeneous c.c.c.\ partial order with its maximum element removed, and consider the lottery sum $\PP=\bigoplus_{\alpha<\omega_1}\QQ_\alpha$, where $\QQ_\alpha=\QQ$ for all $\alpha$. We then consider the automorphism group given by the full support product of $\aut(\QQ)$, acting on the summands pointwise. Finally, we let $\cI$ denote the ideal of countable subsets. Using \autoref{thm:transfer} we get a symmetric version of this partial order such that any symmetric subset, $A$, $\{\alpha\mid A\cap\QQ_\alpha\text{ is non-trivial}\}$ is co-countable. Here a subset is non-trivial if it is not the union of orbits. By homogeneity no trivial subset can be an antichain.


  Now we only need to find a predense set that cannot be reduced to a countable predense set, for example $\bigcup_{\alpha<\omega_1}\QQ_\alpha=\PP\setminus\{\1_\PP\}$. It is indeed predense, but it does not contain any countable predense set, since any countable subset is contained in a countable part of the sum and is therefore not predense.
\end{proof}

The constructions in all three cases satisfy the condition that the ideal of sets is countably closed and therefore $\DC$ holds. We can also, for the sake of clarity, assume that whatever partial orders we used in the ground model do not have any new subsets added, so they certainly preserve their c.c.c.
\section{External definitions and additional results}
\subsection{Mekler's c.c.c.: generic conditions for elementary submodels}
One of the most important concepts in forcing is that or properness, which is a generalisation of c.c.c. We say that a forcing $\PP$ is \textit{proper} if for all sufficiently large $\kappa$, whenever $M\prec H_\kappa$ is a countable elementary submodel such that $\PP\in M$, every $p\in\PP\cap M$ has an extension, $q$, in $\PP$ which is $M$-generic. That is to say, if $D\in M$ is a predense subset of $\PP$, then $D\cap M$ is predense below $q$.

Mekler showed in \cite{Mekler:1984} that the statement ``$\PP$ satisfies the countable chain condition'' is equivalent to requiring that not only $\PP$ is proper, but in fact every condition is $M$-generic in the definition of properness,\footnote{Equivalently, $\1_\PP$ is $M$-generic.} and we can now take this requirement of $M$-genericity as another definition of c.c.c., instead of talking about antichains and predense subsets. This, of course, reinforces the idea that properness is a generalisation of c.c.c.

If we want to study this definition in $\ZF$ we have to add the assumption of $\DC$, since the existence of countable elementary submodels is equivalent to $\DC$ itself. The first author and David Asper\'o studied properness in $\ZF$ in \cite{AsperoKaragila:2020} and argued that at least in the presence of $\ZF+\DC$, Mekler's definition is a good way to approach c.c.c. We support this, as shown in the following proposition.

\begin{proposition}\label{prop:Mekler}
  $\ZF+\DC$ proves that $\ccc3$ is equivalent to Mekler's definition.
\end{proposition}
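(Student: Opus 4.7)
The plan is to argue both directions using standard countable elementary submodel machinery, which under $\DC$ is available via the usual L\"owenheim--Skolem argument (the same one needed to make sense of Mekler's condition in the first place).

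For the forward implication, assume $\PP$ is $\ccc3$. Fix a sufficiently large regular $\kappa$ and a countable $M \prec H_\kappa$ with $\PP \in M$. Given an arbitrary predense $D \in M$ and an arbitrary $q \in \PP$, I need to check that $D \cap M$ is predense below $q$. Inside $H_\kappa$, the statement ``there exists a countable predense subset of $D$'' holds (this is just $\ccc3$ applied to the parameter $D \in H_\kappa$, using that predensity and countability are absolute for subsets of $\PP$ lying in $H_\kappa$). By elementarity $M$ has such a witness $D_0$. Since $D_0$ is countable in $M$, there is a surjection $\omega \to D_0$ in $M$; combined with $\omega \subseteq M$ this gives $D_0 \subseteq M$, so $D_0 \subseteq D \cap M$. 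As $D_0$ is globally predense it is predense below $q$, and hence so is the larger set $D \cap M$; thus every condition is $M$-generic.

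For the reverse implication, assume Mekler's condition. Given any predense $D \subseteq \PP$, invoke $\DC$ to build a countable $M \prec H_\kappa$ containing both $\PP$ and $D$. Applying Mekler's hypothesis to the condition $\1_\PP$ (together with the predense $D \in M$) yields that $D \cap M$ is predense in $\PP$, and $M$ being countable forces $D \cap M$ to be countable. Hence $D \cap M$ is the required countable predense subset of $D$, establishing $\ccc3$.

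The proposition is essentially a direct matching of the two definitions mediated by the countable submodel $M$, and the only delicate ingredient is the existence of such an $M$ -- which is precisely why $\DC$ appears in the hypothesis. No substantive obstacle is anticipated beyond the standard observation that a countable element of $M$ is a \emph{subset} of $M$, which already relies on $\omega \subseteq M$.
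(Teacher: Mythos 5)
Your argument is correct and follows essentially the same route as the paper's proof: the forward direction pulls a countable predense $D_0\subseteq D$ into $M$ by elementarity and uses $\omega\subseteq M$ to conclude $D_0\subseteq D\cap M$, and the reverse direction reads off $D\cap M$ as the desired countable predense subset. The extra details you supply (the surjection $\omega\to D_0$ inside $M$, absoluteness of predensity) are exactly the points the paper leaves implicit.
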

\begin{proof}
  Suppose that $\PP$ is $\ccc3$, and let $M$ be a countable elementary submodel of some $H_\kappa$.\footnote{In $\ZF$ we define $H_\kappa$ as the set of those $x$ whose transitive closure does not map onto $\kappa$. See \cite{AsperoKaragila:2020} for more details.} Then, for every predense $D\in M$, there is some countable predense $D_0\subseteq D$, and by elementarity we have such $D_0\in M$ as well. Moreover, by countability and the fact that $\omega\in M$, we have that $D_0\subseteq M$ and so $D_0\cap M=D_0$. Therefore $D\cap M$ is predense in $\PP$, so $\1_\PP$ is $M$-generic.

  In the other direction, suppose that $\PP$ is c.c.c.\ by Mekler's definition, and let $D$ be a predense set. Let $M$ be a countable elementary submodel of some $H_\kappa$ such that $\PP,D\in M$. Then $D\cap M$ is predense below $\1_\PP$, as per the definition of $M$-genericity, and therefore $D\cap M$ is a countable predense subset of $D$.
\end{proof}

\subsection{Bukovsk\'y's c.c.c.: every new function is a choice function}
Bukovsk\'y proved the following theorem characterising when a model of $\ZFC$, $W$, is a $\kappa$-.c.c.\ extension of another model of $\ZFC$, $V$.
\begin{theorem*}[Bukovsk\'y, \cite{Bukovsky:1973}, also \cite{Bukovsky:2017}]
  Suppose that $V\subseteq W$ are models of $\ZFC$. Then $W$ is a generic extension of $V$ by a $\kappa$-c.c.\ forcing if and only if for every $x\in V$ and $f\colon x\to V$ in $W$, there is some $g\colon x\to V$ in $V$ such that:
  \begin{enumerate}
  \item $V\models|g(u)|<\kappa$ for all $u\in x$, and
  \item $W\models f(u)\in g(u)$ for all $u\in x$.
  \end{enumerate}
\end{theorem*}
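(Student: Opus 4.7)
The forward direction is the standard antichain argument. Assume $W = V[G]$ with $G$ a $V$-generic filter for a $\kappa$-c.c.\ forcing $\PP \in V$. Given $x \in V$ and $f \colon x \to V$ in $W$, fix a $\PP$-name $\dot f \in V$ for $f$. Working in $V$, for each $u \in x$ choose (using $\AC$ in $V$) a maximal antichain $A_u \subseteq \PP$ of conditions that decide $\dot f(\check u)$; by $\kappa$-c.c., $|A_u|^V < \kappa$. Define $g(u) = \{v : \exists p \in A_u,\ p \forces \dot f(\check u) = \check v\}$. Then $g \in V$, $|g(u)|^V < \kappa$, and since some $p \in A_u$ lies in $G$ and decides $\dot f(\check u)$ correctly, $f(u) \in g(u)$ holds in $W$.

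For the reverse direction, the plan is to construct in $V$ a $\kappa$-c.c.\ complete Boolean algebra $\BB$, and in $W$ a $V$-generic ultrafilter $G \subseteq \BB$, with $V[G] = W$. The raw material is the covering data: for each $y \in W$, fix $\alpha$ with $y \subseteq V_\alpha$ and apply the hypothesis to the characteristic function $\chi_y \colon V_\alpha \to 2$ to obtain $g_y \in V$ with $|g_y(u)|^V < \kappa$ and $\chi_y(u) \in g_y(u)$. These small-valued $V$-functions behave like forcing conditions: at each coordinate $u$ they partition the possibilities into fewer than $\kappa$ cells, exactly one of which is realised in $W$. I would let $\BB$ be the complete Boolean algebra in $V$ generated by all cells of the form ``the $u$-th value of a $V$-function $g \colon x \to [V]^{<\kappa}$ equals $c \in g(u)$'', and define $G$ in $W$ as the filter selecting, at each such coordinate, the true cell.

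The $\kappa$-chain condition on $\BB$ falls out of the covering hypothesis directly: a purported antichain of size $\kappa$ induces, via the choice made in $W$, a function $\kappa \to V$ that no $V$-indexed family of size $<\kappa$ covers, contradicting the hypothesis. The $V$-genericity of $G$ reduces to checking that every predense set in $V$ is met by data coming from $W$, which again uses the covering property applied to a suitable indicator function. Then $V[G] \subseteq W$ is immediate, and $W \subseteq V[G]$ because each $y \in W$ is reconstructed from $g_y$ together with $G$'s selection. The main obstacle is the bookkeeping: $\BB$ must be rich enough to decode every set of $W$ yet sparse enough to retain $\kappa$-c.c., and the ultrafilter read off from $W$ must genuinely be $V$-generic rather than merely $V$-consistent. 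Bukovsk\'y's original implementation works with a quotient of all bounded $V$-valued functions modulo agreement on a $W$-definable large set, and the substantive verification is precisely that covering delivers simultaneously the chain condition and the genericity.
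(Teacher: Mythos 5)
Note first that the paper does not prove this statement at all: it is Bukovsk\'y's theorem, quoted from \cite{Bukovsky:1973} and \cite{Bukovsky:2017} as an external black box, so there is no in-paper proof to compare against. Your forward direction is correct and is the standard argument (maximal antichains deciding $\dot f(\check u)$ have size $<\kappa$ by the chain condition; collect the decided values into $g(u)$).

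The reverse direction is where the entire content of the theorem lives, and there your proposal has genuine gaps rather than just omitted bookkeeping. The most concrete one: the ``raw material'' you propose to build $\BB$ from is vacuous. Applying the covering hypothesis to a characteristic function $\chi_y\colon V_\alpha^V\to 2$ gives nothing once $\kappa>2$, since $g(u)=\{0,1\}$ always satisfies both clauses; so the cells you generate $\BB$ from carry no information about $W$. The hypothesis only has bite when applied to functions with large range --- in Bukovsk\'y's actual argument it is applied to functions (in $W$) selecting witnesses or true formulas from $V$-indexed families, and the resulting pointwise $<\kappa$-sized approximations are used to define a quotient of the Lindenbaum algebra of an infinitary propositional language describing a set of ordinals $A$ with $W=V[A]$ (and the reduction to a single such $A$ is itself a step you have skipped). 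Your chain-condition argument is also not right as stated: the hypothesis covers each \emph{value} $f(u)$ by a set of size $<\kappa$, not the function by a family of size $<\kappa$, and any function $\kappa\to V$ is trivially covered pointwise by candidate sets containing its values; an antichain of size $\kappa$ does not obstruct this, so no contradiction arises the way you describe. In short: the forward direction is complete, but the reverse direction is a plan that defers, and in places misstates, exactly the parts of Bukovsk\'y's proof that are hard.
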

This definition is often translated to a covering and approximation properties used extensively in the study of set theoretic geology, as they play a significant role in the ground model definability theorem (see Theorem~3 in \cite{Laver:2007}).

\begin{proposition}
$\ZF$ proves that if $\PP$ is $\ccc3$ then for every $V$-generic filter $G$, Bukovsk\'y's condition holds for $V\subseteq V[G]$ with $\kappa=\aleph_1$.
\end{proposition}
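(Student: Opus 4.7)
The plan is to let $\dot f\in V$ be a $\PP$-name with $\dot f^G=f$ and to fix $p\in G$ forcing $\dot f\colon\check x\to\check V$, and then to define $g\in V$ directly by the formula
\[g(u) := \{z\in V\midd \exists q\leq p,\ q\forces \dot f(\check u)=\check z\}.\]
This is definable from $p$ and $\dot f$, so it lies in $V$ (provided each $g(u)$ is a set). What remains is to verify, in $V$, that each $g(u)$ is countable, and to verify, in $V[G]$, that $f(u)\in g(u)$.

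For the countability, I would apply $\ccc3$ to the set
\[D_u := \{r\leq p\midd \exists z\in V,\ r\forces\dot f(\check u)=\check z\}\cup\{r\in\PP\midd r\incompatible p\},\]
which is predense in $\PP$ because its first part is dense below $p$ and the second part handles everything incompatible with $p$. By $\ccc3$ there is a countable predense $D_u^0\subseteq D_u$; for each $r\in D_u^0$ compatible with $p$ (equivalently, $r\leq p$), let $z_r$ be the unique element with $r\forces\dot f(\check u)=\check{z_r}$. The key claim is $g(u)=\{z_r : r\in D_u^0,\ r\leq p\}$: if $q\leq p$ forces $\dot f(\check u)=\check z$, then by predensity $q$ is compatible with some $r\in D_u^0$; such an $r$ cannot be incompatible with $p$, so $r\leq p$; and two compatible conditions deciding $\dot f(\check u)$ must decide it the same way, forcing $z=z_r$. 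Thus $g(u)$ is a surjective image of the countable set $D_u^0$, so $g(u)$ is a set and is countable in $V$.

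For $f(u)\in g(u)$, the set $\{q\leq p : q\text{ decides }\dot f(\check u)\}$ is dense below $p\in G$, so some $q\in G$ forces $\dot f(\check u)=\check{f(u)}$, placing $f(u)$ in $g(u)$ by definition.

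The main obstacle, and the reason I would not mimic the $\ZFC$ proof more literally, is choice-avoidance. The na\"\i ve $\ZFC$ approach is to choose, for each $u$, a specific countable predense $D_u^0$ and set $g(u):=\{z_r : r\in D_u^0,\ r\leq p\}$, but in $\ZF$ no such uniform selection of $D_u^0$ over $u\in x$ need exist. Defining $g$ by a single formula in the parameters $p$ and $\dot f$ sidesteps this entirely; the existence of some $D_u^0$, one $u$ at a time, is used only to prove after the fact that the definable $g(u)$ has countable cardinality in $V$.
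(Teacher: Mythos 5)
Your proof is correct and follows essentially the same route as the paper's: both define $g(u)$ uniformly as the set of all values of $\dot f(\check u)$ decided by some condition (so no choice is needed to form $g$), and both apply $\ccc3$ to the (pre)dense set of deciding conditions to conclude that each $g(u)$ is countable. Your only refinements---padding $D_u$ with the conditions incompatible with $p$ to get genuine predensity, and spelling out why two compatible deciding conditions must decide the value the same way---are a more careful rendering of the same argument.
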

\begin{proof}
  Suppose that $\PP$ is $\ccc3$ and let $f\colon x\to y$ be some function in $V[G]$ with $x,y\in V$. Let $\dot f$ be a name such that $\dot f^G=f$, and let $p\in G$ be a condition forcing that $\dot f\colon\check x\to\check y$ is a function. For each $u\in x$ let $D_u$ be the set of conditions which decide the value of $\dot f(\check u)$ and let $E_u=\{w\in y\mid\exists p\in D_u, p\forces\dot f(\check u)=\check w\}$. Since $D_u$ is a dense open set and $\PP$ is $\ccc3$, it contains a countable predense subset, and therefore $E_u$ must be a countable set for all $u\in x$. Thus, setting $g(u)=E_u$ shows that Bukovsk\'y's condition holds.
\end{proof}

It is not immediately clear that the other direction holds as well, i.e.\ Bukovsk\'y's definition is equivalent to $\ccc3$, and we suspect that a positive answer can be found by an enthusiastic graduate student interested in forcing over models of $\ZF$. We will direct this student to Bukovsk\'y's paper, noting that the proof of Theorem~3 in \cite{Bukovsky:2017}, which is used to show that $W$ is indeed a generic extension of $V$, actually holds in $\ZF$. We do have an important corollary of the above proposition.

\begin{corollary}\label{cor:ccc3-collapses}
If $\PP$ satisfies Bukovsk\'y's condition with $\kappa=\aleph_1$, then $\PP$ does not change cofinalities at all and does not collapse any $\kappa>\omega_1$.\footnote{We note that preserving cofinalities might be distinct from preserving cardinals in $\ZF$. For example, it is consistent that $\cf(\kappa)=\cf(\kappa^+)=\kappa$, in that case forcing with $\Col(\kappa,\kappa^+)$ will collapse cardinals but not change cofinalities.} In particular, if $\omega_1$ is regular, then it is not collapsed.
\end{corollary}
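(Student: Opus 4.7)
The plan is to derive both conclusions from a single cofinality computation using Bukovsk\'y's condition. Specifically, I will show that for every limit ordinal $\kappa$ one has $\cf^{V[G]}(\kappa) = \cf^V(\kappa)$, and then deduce non-collapse of $V$-cardinals above $\omega_1$ and of $\omega_1$ itself when it is $V$-regular.

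For cofinality preservation, the inequality $\cf^{V[G]}(\kappa) \leq \cf^V(\kappa)$ is trivial since cofinal $V$-sequences remain cofinal in $V[G]$, so I assume $\cf^V(\kappa) \geq \omega_1$ and take a cofinal $f \colon \mu \to \kappa$ in $V[G]$ with $\mu = \cf^{V[G]}(\kappa)$. Applying Bukovsk\'y's condition with $x = \mu$ yields $g \in V$, $g \colon \mu \to V$, with each $g(\alpha)$ countable in $V$ and $f(\alpha) \in g(\alpha)$, and I may harmlessly take $g(\alpha) \subseteq \kappa$. Since $\cf^V(\kappa) > \omega$, any countable set of ordinals below $\kappa$ is bounded in $\kappa$, so $h(\alpha) := \sup g(\alpha) < \kappa$ defines an $h \in V$ that dominates $f$ pointwise and is therefore cofinal in $\kappa$, giving $\cf^V(\kappa) \leq \mu$. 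The case $\cf^V(\kappa) = \omega$ is immediate from the trivial direction.

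For non-collapse of a $V$-cardinal $\kappa > \omega_1$, I suppose toward a contradiction that there is a bijection $f \colon \beta \to \kappa$ in $V[G]$ with $\omega \leq \beta < \kappa$. Bukovsk\'y's condition produces $g \in V$, $g \colon \beta \to V$, with $g(\gamma) \subseteq \kappa$ countable and $f(\gamma) \in g(\gamma)$. Because each $g(\gamma)$ is a countable set of ordinals, it has order type at most $\omega$ and so can be uniformly enumerated in $V$, assembling a surjection $\beta \times \omega \to \kappa$ in $V$. This gives $|\kappa|^V \leq |\beta|^V < \kappa$, contradicting that $\kappa$ is an initial ordinal of $V$.

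The same template applies at $\omega_1$ when $\omega_1$ is $V$-regular: put $\beta = \omega$ and repeat; each $g(n) \subseteq \omega_1$ is countable and hence bounded below $\omega_1$ by regularity, so the function $h(n) = \sup g(n)$ from $\omega$ to $\omega_1$ lies in $V$ and is itself bounded by regularity, contradicting surjectivity of $f$. The main obstacle, and the reason for the $\kappa > \omega_1$ hypothesis, is the $\ZF$ phenomenon that a countable union of countable sets need not be countable. For $\kappa > \omega_1$ this is neutralised because the countable pieces $g(\gamma)$ are sets of ordinals and so canonically enumerable, but at $\omega_1$ itself one genuinely needs regularity of $\omega_1$ to bound the union.
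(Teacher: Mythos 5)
Your overall strategy (Bukovsk\'y covering followed by a cardinality/cofinality computation on the union of the covering sets) is the same as the paper's, and both the cofinality-preservation argument and the $\omega_1$-regular case are correct. But the non-collapse argument for $\kappa>\omega_1$ contains a genuine error: a countable set of ordinals does \emph{not} have order type at most $\omega$ --- it has order type some ordinal below $\omega_1$, e.g.\ $\{0,1,2,\dots\}\cup\{\omega\}$ has type $\omega+1$. Consequently the claimed uniform enumeration of the sets $g(\gamma)$ by $\omega$, and with it the surjection $\beta\times\omega\to\kappa$ in $V$, is unjustified. Indeed it must fail in general: in $\ZF$ one cannot uniformly choose bijections of countable ordinals with $\omega$ (in the Feferman--L\'evy model such a uniform choice for all $\delta<\omega_1$ would make $\omega_1$ countable). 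Moreover, if this step were correct it would give $|\kappa|^V\le|\beta|^V$ with no use of the hypothesis $\kappa>\omega_1$ nor of the regularity of $\omega_1$, i.e.\ it would show that Bukovsk\'y's condition never collapses \emph{any} infinite cardinal --- strictly more than the corollary claims, which is a sign the argument proves too much.

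The repair is exactly the paper's \autoref{lemma:countable-unions}: the canonical (increasing) enumeration of a countable set of ordinals is an injection into $\omega_1$, not $\omega$, so $\bigcup_{\gamma<\beta}g(\gamma)$ canonically injects into $\beta\times\omega_1$ and hence has size at most $\max(|\beta|^V,\aleph_1)$. This is $<\kappa$ precisely because $\kappa>\omega_1$ (together with $|\beta|<\kappa$), which is where that hypothesis genuinely enters; for $\kappa=\omega_1$ one must instead use regularity to bound the union, as you correctly do in your last paragraph.
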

To prove this, we need a small combinatorial lemma.
\begin{lemma}\label{lemma:countable-unions}
If $\mu\geq\omega_1$ and $\{A_\alpha\mid\alpha<\mu\}$ is a family of countable sets of ordinals, then $|\bigcup\{A_\alpha\mid\alpha<\mu\}|\leq\mu$. Moreover, if $\omega_1$ is regular, this holds for $\mu=\omega$ as well.
\end{lemma}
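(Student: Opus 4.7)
The plan is to exploit the fact that we are dealing with sets of \emph{ordinals}, which are canonically well-orderable by $\in$, so no appeal to choice is needed to enumerate them. For each $\alpha<\mu$ I would take $\gamma_\alpha=\otp(A_\alpha)$ and the \emph{unique} order isomorphism $f_\alpha\colon\gamma_\alpha\to A_\alpha$; since $A_\alpha$ is a countable set of ordinals we automatically have $\gamma_\alpha<\omega_1$, and the sequence $\tup{f_\alpha\mid\alpha<\mu}$ is thus definable from $\tup{A_\alpha\mid\alpha<\mu}$ with no hidden choices.

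From here I would assemble the surjection
\[F\colon\mu\times\omega_1\to\bigcup_{\alpha<\mu}A_\alpha,\qquad F(\alpha,\beta)=\begin{cases}f_\alpha(\beta)&\text{if }\beta<\gamma_\alpha,\\ a_0&\text{otherwise,}\end{cases}\]
where $a_0$ is some fixed element of the union (e.g.\ its $\in$-least element; the case of an empty union is trivial). Because the union is again a set of ordinals, and hence well-orderable, a surjection onto it from the well-orderable set $\mu\times\omega_1$ suffices to bound its cardinality in $\ZF$: taking $\in$-least preimages yields $\left|\bigcup_{\alpha<\mu}A_\alpha\right|\leq|\mu\times\omega_1|$. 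Standard aleph arithmetic then gives $|\mu\times\omega_1|=\mu$ whenever $\mu\geq\omega_1$, finishing the main claim.

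For the moreover clause, if $\omega_1$ is regular then the countable sequence of countable ordinals $\tup{\gamma_\alpha\mid\alpha<\omega}$ is bounded in $\omega_1$ by some $\delta<\omega_1$; the same $F$ then restricts to a surjection from the countable set $\omega\times\delta$ onto the union, making the union countable. The only point that needs real care is ensuring every ingredient of $F$ is canonical, which is exactly what working with ordinals rather than abstract countable sets buys us; everything past that is routine well-orderable cardinal arithmetic, which is $\ZF$-safe.
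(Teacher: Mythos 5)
Your proof is correct and follows essentially the same route as the paper's: both exploit the canonical (choice-free) enumeration of each $A_\alpha$ by its order type to code the union into $\mu\times\omega_1$, then use $\mu\cdot\aleph_1=\mu$, and both use regularity of $\omega_1$ to bound the order types when $\mu=\omega$. The only cosmetic difference is that you build a surjection from $\mu\times\omega_1$ and invert it via least preimages, whereas the paper writes down the corresponding injection directly.
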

\begin{proof}[Proof of \autoref{cor:ccc3-collapses}]
  This is, in a nutshell the usual proof in $\ZFC$ that a c.c.c.\ forcing preserves cofinalities, but with a small twist at the end. Let $\PP$ be a $\ccc3$ forcing, and let $G\subseteq\PP$ be a $V$-generic filter.

If $f\colon\mu\to\kappa$ is any function in $V[G]$, then by Bukovsk\'y's condition there exists some $F\colon\mu\to[\kappa]^{<\omega_1}$ in $V$ such that $f(\xi)\in F(\xi)$ for all $\xi<\mu$. If $f$ was a bijection in $V[G]$, then $\bigcup\{F(\xi)\mid\xi<\mu\}$ must cover $\kappa$, but by the lemma it has size $\mu$, so $\mu=\kappa$. If $f$ was cofinal, then we may assume that $\kappa$ is regular, otherwise it defines a cofinal function in $\cf(\kappa)$ instead,\footnote{Unless $\cf(\kappa)=\omega$, in which case we cannot change the cofinality of $\kappa$ by adding new sets to the universe.} in this case $\xi\mapsto\sup F(\xi)$ is a cofinal function in $\kappa$, so again $\mu=\kappa$.
\end{proof}
\begin{proof}[Proof of \autoref{lemma:countable-unions}]
  Since $\{A_\alpha\mid\alpha<\mu\}$ are all sets of ordinals, the union can be enumerated as $\{a_\xi\mid\xi<\lambda\}$ for some $\lambda$. We define an injection from $\lambda$ into $\mu\times\omega_1$: $\xi\mapsto\tup{\alpha,\beta}$ if and only if $\alpha$ is the least such that $x_\xi\in A_\alpha$, and $\beta$ is the position of $a_\xi$ in $A_\alpha$ when considering the canonical enumeration of $A_\alpha$.

  It is easy to see that this function is injective, but since $\mu\cdot\aleph_1=\mu$ we get that $\lambda\leq\mu$ as wanted. If $\omega_1$ is regular, then this holds also for the case that $\mu=\omega$ since that implies that the range of the above function is bounded in $\omega_1$.
\end{proof}
\begin{corollary}\label{cor:ccc3-is-well-behaved}
  If $\kappa>\omega_1$, then a $\ccc3$ forcing cannot or change cofinalities or collapse any $\kappa>\omega_1$, and if $\omega_1$ is regular then a $\ccc3$ forcing does not change cofinalities or collapse cardinals at all.\qed
\end{corollary}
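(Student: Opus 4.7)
The plan is to observe that this corollary is essentially a direct amalgamation of the two immediately preceding results. The unnamed proposition shows that any $\ccc3$ forcing satisfies Bukovsk\'y's condition with $\kappa=\aleph_1$, and \autoref{cor:ccc3-collapses} translates Bukovsk\'y's condition at $\aleph_1$ into the conclusion that cofinalities are unchanged and no cardinal strictly above $\omega_1$ is collapsed. Combining them yields the first clause of the statement immediately; there is no new content.

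For the second clause, where we assume $\omega_1$ is regular, the task is to rule out the two remaining scenarios not handled by \autoref{cor:ccc3-collapses}: that $\omega_1$ itself becomes countable in $V[G]$, or that its cofinality is lowered to $\omega$. Both possibilities would be witnessed by a new function $f\colon\omega\to\omega_1$ that is either a surjection or cofinal. As in the proof of \autoref{cor:ccc3-collapses}, Bukovsk\'y's condition supplies a ground-model function $F\colon\omega\to[\omega_1]^{<\omega_1}$ with $f(n)\in F(n)$ for every $n<\omega$. The only step in the earlier argument that fails for $\mu=\omega$ is the appeal to \autoref{lemma:countable-unions}, which required $\mu\geq\omega_1$; however, the ``moreover'' clause of that lemma precisely supplies the missing case under the additional hypothesis that $\omega_1$ is regular. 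Applying it, $\bigcup\{F(n)\mid n<\omega\}$ is a countable subset of $\omega_1$, so it cannot cover $\omega_1$ nor be cofinal in it, contradicting the choice of $f$.

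There is no genuine obstacle: both parts are a matter of invoking prior results and, for the second part, noting that the only place where the earlier proof used $\mu\geq\omega_1$ is covered verbatim by the regular-$\omega_1$ case of \autoref{lemma:countable-unions}. The proof can therefore be written as one or two lines referring back to the previous proposition, \autoref{cor:ccc3-collapses}, and the ``moreover'' clause of \autoref{lemma:countable-unions}, which explains the $\qed$ attached to the statement.
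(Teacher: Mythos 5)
Your proposal is correct and matches the paper's (implicit) argument: the corollary carries a \qed precisely because it is the conjunction of the preceding unnamed proposition with \autoref{cor:ccc3-collapses}, whose own statement already includes the ``if $\omega_1$ is regular then it is not collapsed'' clause. Your re-derivation of that clause via the ``moreover'' part of \autoref{lemma:countable-unions} is exactly how the paper proves it inside \autoref{cor:ccc3-collapses}, so it is redundant here but entirely consistent with the intended reading.
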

\subsection{Instances where the Main Theorem fails in \texorpdfstring{$\ZF$}{ZF}}
Our counterexamples in the previous section can be seen as somewhat ridiculous. Indeed, every counterexample was $\ccc1/\ccc2$ for some silly reason, like the lack of uncountable antichains altogether. But forcing works in a way that allows us to replace any preordered set by a unique complete Boolean algebra through a sequence of steps: quotient to get a partial order, quotient again to get a separative order, then take the Boolean completion. All of these steps are very explicit and so can be done without utilising the axiom of choice at all.

\begin{proposition}\label{prop:cBa}
If $\PP$ is a complete Boolean algebra, then $\ccc1(\PP)\to\ccc2(\PP)$.
\end{proposition}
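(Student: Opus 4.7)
The plan is to reduce arbitrary antichains in $\PP$ to maximal ones by ``capping them off'' with a single extra element, and then invoke $\ccc1$. Given an antichain $A \subseteq \PP$, completeness of $\PP$ as a Boolean algebra lets us form $s = \bigvee A$. If $s = \1_\PP$ then $A$ is already predense and $\ccc1$ finishes the job, so assume $s \neq \1_\PP$. The element $\neg s$ is then nonzero and disjoint from every $a \in A$ (since $a \leq s$), so the candidate for a maximal antichain extending $A$ is $A \cup \{\neg s\}$.

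To check that $A \cup \{\neg s\}$ is actually predense, take any nonzero $p \in \PP$. Either $p \wedge \neg s \neq 0$, in which case $p$ is compatible with $\neg s$, or else $p \leq s$. In the latter case I need to produce some $a \in A$ with $p \wedge a \neq 0$, and this is exactly the content of the infinite distributive law $p \wedge \bigvee A = \bigvee_{a \in A}(p \wedge a)$: if every $p \wedge a$ vanished then $p = p \wedge s$ would vanish too. Once maximality is in hand, $\ccc1$ forces $A \cup \{\neg s\}$ to be countable, hence so is $A$.

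The only step that is not purely formal is verifying the infinite distributive law in $\ZF$. The inequality $\bigvee_{a \in A}(p \wedge a) \leq p \wedge \bigvee A$ is immediate from $p \wedge a \leq p$ and $p \wedge a \leq \bigvee A$. For the reverse, each $a \in A$ satisfies $a = (p \wedge a) \vee (\neg p \wedge a) \leq \bigvee_{a' \in A}(p \wedge a') \vee \neg p$, so taking the supremum over $a \in A$ yields $\bigvee A \leq \bigvee_{a \in A}(p \wedge a) \vee \neg p$; meeting with $p$ gives the desired bound. No form of choice is used anywhere, so the argument lives entirely inside $\ZF$, which is what the proposition asks for. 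The main ``obstacle'' is really just being careful that neither the existence of $s$ nor the distributive law silently appeals to choice, and both are safe by completeness and by pure Boolean algebra respectively.
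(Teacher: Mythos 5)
Your proof is correct and is essentially the paper's argument: the element $\neg\bigvee A$ you adjoin is exactly $\sup A^\perp$ (since $p\incompatible a$ for all $a\in A$ iff $p\leq\neg\bigvee A$), which is precisely what the paper adds to $A$ to form a maximal antichain. The extra care you take with the degenerate case $\bigvee A=\1_\PP$ and with verifying the infinite distributive law in $\ZF$ is a welcome elaboration of details the paper leaves implicit, not a different route.
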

\begin{proof}
Given an antichain $A$ in any complete Boolean algebra, $\PP$, $A\cup\{\sup A^\perp\}$ is a maximal antichain, so by $\ccc1(\PP)$ it is also countable, and so must $A$ be countable as well.
\end{proof}
Nevertheless, the counterexample in \autoref{thm:ccc2} still works, since it requires shrinking a given predense set, and it is not clear if $\ccc3$ follows from $\ccc2$+``$\PP$ is a complete Boolean algebra''. So even in the Boolean completion of that partial order we can take the same predense set and use it as a counterexample. This once again reinforces the observation that $\ccc3$ is somehow ``the correct way'' of thinking about c.c.c.\ in $\ZF$.

\begin{proposition}
Suppose that $\PP$ is a $\ccc1$ complete Boolean algebra. Then $\PP$ satisfies Bukovsk\'y's condition with $\kappa=\aleph_1$.
\end{proposition}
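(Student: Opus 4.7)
The plan is to exploit the Boolean-algebra structure directly, trading the $\ccc3$-style ``shrink a predense set'' argument used just above for a Boolean-values argument that only needs $\ccc2$. First I would invoke \autoref{prop:cBa} to upgrade the hypothesis $\ccc1(\PP)$ to $\ccc2(\PP)$; this is the only place where completeness of $\PP$ enters in an essential way.

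Now fix $x \in V$, $f \colon x \to V$ in $V[G]$, a $\PP$-name $\dot f$ for $f$, and some $p \in G$ forcing that $\dot f$ is a function with domain $\check x$. For each $u \in x$ and each $y \in V$, work in $V$ and set
\[
  b_{u,y} = \llbracket \dot f(\check u) = \check y \rrbracket \in \PP,
\]
which is well-defined because $\PP$ is a complete Boolean algebra. If $y_1 \neq y_2$ and $c := b_{u,y_1} \wedge b_{u,y_2} \neq 0$, then $c$ forces both $\dot f(\check u) = \check y_1$ and $\dot f(\check u) = \check y_2$, contradicting the fact that $\check y_1 \neq \check y_2$. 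Hence the nonzero $b_{u,y}$ are pairwise disjoint, which in turn forces $y \mapsto b_{u,y}$ to be injective on $E_u := \{y \in V \mid b_{u,y} \neq 0\}$: two distinct $y$'s cannot share a nonzero Boolean value since then that value would meet itself nontrivially. Therefore the set $A_u := \{b_{u,y} \mid y \in E_u\}$ (a subset of $\PP$ definable by Separation) is an antichain, and $\ccc2(\PP)$ makes $A_u$ countable; applying Replacement to the inverse of the injection $y \mapsto b_{u,y}$ recovers $E_u$ as a countable set in $V$.

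Setting $g(u) = E_u$ produces, via Replacement on $x$, a function $g \colon x \to V$ in $V$ with $|g(u)| \leq \aleph_0$ for every $u \in x$. To check $f(u) \in g(u)$ in $V[G]$: since $p \in G$ forces that $\dot f(\check u)$ is defined and $f(u) = \dot f^G(u) \in V$, some condition in $G$ forces $\dot f(\check u) = \check{f(u)}$, and hence $b_{u,f(u)} \geq $ that condition, so $b_{u,f(u)} \neq 0$ and $f(u) \in E_u = g(u)$.

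I do not expect a real obstacle here; the only subtlety worth highlighting in the write-up is that $E_u$ must be confirmed to be a \emph{set} and not a proper class in $V$, and this is exactly what the injection $E_u \hookrightarrow \PP$ (using disjointness of the Boolean values) secures. Everything else is a straightforward transcription of the standard $\ZFC$ argument, with the reduction $\ccc1 \Rightarrow \ccc2$ on complete Boolean algebras doing the essential choice-free work.
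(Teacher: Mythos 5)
Your proof is correct and is essentially the paper's argument: both of you compute, for each $u$ and each candidate value $y$, the Boolean value of $\dot f(\check u)=\check y$ (the supremum in $\PP$ of the conditions deciding the value to be $y$) and apply a chain condition to the resulting pairwise-disjoint family to conclude that only countably many values are possible. The only difference is that the paper notes this family is a \emph{maximal} antichain and applies $\ccc1$ directly, whereas you first upgrade to $\ccc2$ via \autoref{prop:cBa}; both routes work, and yours has the minor advantage of not needing to normalise $\dot f$ so that the antichain has supremum $\1$.
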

\begin{proof}
Suppose that $G\subseteq\PP$ is a $V$-generic filter and $f\in V[G]$, $f\colon\omega\to V$, and let $\dot f$ be a name for $f$. For each $n<\omega$ and $x\in V$, let $D_{n,x}=\{p\in\PP\mid p\forces\dot f(\check n)=\check x\}$. Since $\PP$ is a complete Boolean algebra, there is a condition $p_{n,x}=\sup D_{n,x}$ in $\PP$. Moreover, $A_n=\{p_{n,x}\mid x\in V\}$ is a maximal antichain in $\PP$ and therefore it is countable. We simply define $F(n)=\{x\mid p_{n,x}\in A_n\}$, and then $F$ witnesses that Bukovsk\'y's condition holds as wanted.
\end{proof}

Finally, many times our forcing has a natural tree structure (when reversing the order), we say that a forcing $\PP$ is \textit{co-well founded} it is well-founded in the reverse order.
\begin{proposition}
If $\PP$ is co-well founded and $\ccc1$, then it is $\ccc3$.
\end{proposition}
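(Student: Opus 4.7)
The plan is to reduce to finding a maximal antichain of $\PP$ inside the dense set $\downarrow D$, bound its size by $\ccc1$, and lift it back to a countable predense subset of $D$.

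First, I would use co-well-foundedness to show that the set $M(D)$ of $\leq$-maximal elements of $D$ is predense in $\PP$. Given $p \in \PP$, predensity of $D$ yields $d \in D$ and a common extension $r \leq p, d$; the set $\{d' \in D : d' \geq r\}$ is nonempty, so by co-well-foundedness has a $\leq$-maximal element $m \in M(D)$, and $r$ witnesses that $m$ and $p$ are compatible. In the tree-like case this already finishes the proof, since two distinct $\leq$-maximal elements of $D$ would then be incomparable in $\PP$ and hence incompatible, so $M(D)$ would be a maximal antichain and $\ccc1$ would make it countable.

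In the general co-well-founded case, however, $M(D)$ need not be an antichain, so I would produce a maximal antichain $A \subseteq \downarrow D$ of $\PP$ by recursion: at each stage, either the current antichain $A_\alpha \subseteq \downarrow D$ is predense in $\downarrow D$ (and hence, by density of $\downarrow D$, a maximal antichain of $\PP$), or one adjoins some $p \in \downarrow D$ incompatible with $A_\alpha$. Because $\ccc1$ forces every maximal antichain of $\PP$ to be countable, the recursion must terminate at some countable ordinal---otherwise it would produce an uncountable maximal antichain. Once $A$ is in hand, countable choice picks for each $a \in A$ some $d_a \in D$ with $a \leq d_a$, and $D_0 = \{d_a : a \in A\}$ is the desired countable predense subset: for any $p \in \PP$, maximality of $A$ gives $a \in A$ with a common extension $r \leq a, p$, whence $r \leq a \leq d_a$ shows $d_a \compatible p$.

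The main obstacle is carrying out the recursion for $A$ in the general case, since strictly it requires a dependent-choice principle at successor (and limit) stages. Co-well-foundedness is the structural hypothesis that makes the whole argument go through: it gives the predensity of $M(D)$, provides $\leq$-maximal witnesses throughout without appeal to Zorn, and is genuinely necessary, as $\ccc1 \nto \ccc3$ in general (by \autoref{thm:ccc1} one has $\ccc1 \nto \ccc2$, and $\ccc3 \to \ccc2$), and the counterexample there is not co-well-founded.
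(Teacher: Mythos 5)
Your first step is correct and is essentially the paper's idea: by co-well-foundedness the nonempty set $\{d'\in D\mid d'\geq r\}$ has a $\leq$-maximal element, which is then $\leq$-maximal in all of $D$, so the set of $\leq$-maximal (equivalently, rank-minimal for a rank function on the reversed order) members of $D$ is predense; the paper's proof simply asserts that this canonical subset of $D$ is a predense antichain and concludes by $\ccc1$. You are right to be uneasy that it need not be an antichain outside the tree-like case, but your repair for the general case has a genuine gap, and it is exactly the gap this paper is about. The termination argument is a non sequitur: if the recursion runs for $\omega_1$ stages, the set $A_{\omega_1}=\bigcup_{\alpha<\omega_1}A_\alpha\subseteq\da D$ is an uncountable \emph{antichain}, but there is no reason for it to be \emph{maximal} --- maximality is precisely what the recursion has failed to achieve at every stage --- so $\ccc1$ (``every maximal antichain is countable'') yields no contradiction. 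What you need there is $\ccc2$, and with $\ccc2$ in hand your recursion is literally the proof of \autoref{prop:ccc2-to-ccc3-under-strong-dc}, where it also requires $\DC_{\omega_1}$.

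The choice issues you flag are likewise not discharged by co-well-foundedness. The recursion must select, at each of up to $\omega_1$ successor stages, a single element of $\da D\cap A_\alpha^{\perp}$, and your last step invokes countable choice to pick $d_a\in D$ above each $a\in A$; neither is available in $\ZF$, and co-well-foundedness does not canonize these selections, since a nonempty subset of $\PP$ may have many $\leq$-maximal (or rank-minimal) elements. The point of the co-well-foundedness hypothesis is to replace such choices by a canonical, rank-definable selection inside $D$ itself, as in the paper's one-line argument; your recursion reintroduces them. In short: the predensity of $M(D)$ and the tree-like case are fine and match the paper, but the general case still needs a choice-free argument that uses only $\ccc1$, which the proposed recursion does not provide.
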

\begin{proof}
Let $r$ be a rank function on $\PP$ with the reverse order, and let $D$ be a predense set. Then the set of rank-minimal members of $D$ is an antichain and it is predense, therefore it is a maximal antichain, so by $\ccc1$ it is countable.
\end{proof}
This means that in many natural examples we can think of c.c.c.\ just as we did in $\ZFC$, as many of the instances of forcing notions we care about have a natural rank function. For example, $X^{<\omega}$, ordered by $\supseteq$, for any set $X$.

Both of these conditions lend themselves to the following corollary.
\begin{corollary}\label{cor:cba-co-wf-are-nice}
If $\PP$ is a $\ccc1$ complete Boolean algebra or is co-well founded, then the conclusion of \autoref{cor:ccc3-is-well-behaved} holds for $\PP$.\qed
\end{corollary}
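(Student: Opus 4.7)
The strategy is immediate: both hypotheses have just been shown to imply something sufficient for the conclusion of \autoref{cor:ccc3-is-well-behaved}, so the proof should simply split on cases and cite the preceding propositions.

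For the case that $\PP$ is co-well founded and $\ccc1$, the preceding proposition gives that $\PP$ is in fact $\ccc3$, so \autoref{cor:ccc3-is-well-behaved} applies verbatim and delivers the full conclusion (no change of cofinalities, no collapse of $\kappa>\omega_1$, and no collapse of any cardinal when $\omega_1$ is regular).

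For the case that $\PP$ is a $\ccc1$ complete Boolean algebra, I would recall that we have just established that such a $\PP$ satisfies Bukovsk\'y's condition with $\kappa=\aleph_1$. But the statement of \autoref{cor:ccc3-collapses} was in fact proved purely from Bukovsk\'y's condition (the proof never used $\ccc3$ except as a vehicle to obtain Bukovsk\'y), so it applies directly to give exactly the same cofinality-preservation and non-collapse conclusions. Combining the two cases yields the corollary.

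The only thing worth flagging is that in the Boolean algebra case, Bukovsk\'y's condition was established only for functions $f\colon\omega\to V$, rather than for arbitrary $f\colon x\to V$ as in Bukovsk\'y's original definition. This is however exactly what is needed in the proof of \autoref{cor:ccc3-collapses}: for any cofinal or bijective $f\colon\mu\to\kappa$ in $V[G]$, the argument only requires an $F$ approximating $f$ pointwise by countable sets. When $\mu\geq\omega_1$ this is provided by Bukovsk\'y for functions on $x=\mu$, and when $\mu=\omega$ (only relevant if $\omega_1$ is regular) this is precisely the instance the Boolean-algebra proposition covers. So no additional work is required; the main obstacle, if any, is simply noting that the statement of Bukovsk\'y for domain $\omega$ is enough to run the same $\bigcup F(\xi)$ covering argument used in the $\ccc3$ case.
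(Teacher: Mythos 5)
Your proposal is correct and matches the paper's intended (and unwritten) argument exactly: the co-well-founded case reduces to $\ccc3$ via the preceding proposition, and the complete Boolean algebra case goes through Bukovsk\'y's condition, which is all that the proof of \autoref{cor:ccc3-collapses} actually uses. The point you flag about the domain being $\omega$ in the Boolean-algebra proposition is harmless, since its proof works verbatim with any $x\in V$ in place of $\omega$.
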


\subsection{Preservation of Dependent Choice Principles}
\begin{theorem}
Suppose that $\ZF+\DC_\kappa$ holds and that $\PP$ is a $\ccc3$ forcing, then $\forces_\PP\DC_\kappa$.
\end{theorem}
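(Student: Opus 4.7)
The plan is to work in the ground model $V$ (where $\DC_\kappa$ is assumed) and, given a name $\dot T$ and a condition $p\in\PP$ forcing ``$\dot T$ is a $\kappa$-closed tree without a maximal element'', to use $\DC_\kappa$ there to recursively construct names $\dot t_\alpha$ for $\alpha<\kappa$ such that $p$ forces $\tup{\dot t_\alpha\mid\alpha<\kappa}^\bullet$ to be a chain of type $\check\kappa$ in $\dot T$. The recursion has length $\kappa$; the key tool at each stage is $\ccc3$, which reduces the a priori large predense set of conditions deciding an extension of the partial chain to a countable one over which countably many names can then be mixed into a single $\dot t_\alpha$.

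More concretely, I would work in $\PP\restriction p$, which inherits $\ccc3$ from $\PP$ because every predense subset of $\PP\restriction p$ extends to a predense subset of $\PP$ by adjoining $\{q\mid q\incompatible p\}$, and any countable predense refinement of the enlarged set restricts back. Suppose we already have $\tup{\dot t_\beta\mid\beta<\alpha}$ with $p$ forcing them to form a chain in $\dot T$. Let
\[D_\alpha=\{q\leq p\mid\exists\text{ name }\dot t\colon q\forces \dot t\in\dot T\wedge\forall\beta<\alpha\,(\dot t_\beta<_{\dot T}\dot t)\}.\]
This is predense below $p$: given any $r\leq p$, passing to a generic $G\ni r$ gives a chain of length $\alpha<\kappa$ in $\dot T^G$, which has an upper bound by $\kappa$-closedness and then a strict extension since $\dot T^G$ has no maximum, and some condition below $r$ in $G$ decides a name for that strict extension and hence lies in $D_\alpha$. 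Apply $\ccc3$ to $D_\alpha\cup\{q\mid q\incompatible p\}$ to obtain a countable $D_\alpha^0\subseteq D_\alpha$ predense below $p$; use $\DC$ (which $\DC_\kappa$ implies, by applying $\DC_\kappa$ to the tree of countable chains of a given tree without maximal element) to pick a family of witnessing names $\dot t^q_\alpha$ for $q\in D_\alpha^0$; and mix them along a countable maximal antichain refining $D_\alpha^0$ to produce $\dot t_\alpha$. The usual predense-set argument then forces $\dot t_\beta<_{\dot T}\dot t_\alpha$ for all $\beta<\alpha$ below $p$, maintaining the inductive hypothesis.

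The main point requiring care is bundling the per-stage choices so that the whole construction fits inside a single application of $\DC_\kappa$ in $V$: at stage $\alpha$ the object chosen is the tuple consisting of $D_\alpha^0$, an enumeration of it, the family $\tup{\dot t^q_\alpha\mid q\in D_\alpha^0}$, and the resulting mixed name $\dot t_\alpha$. The class of such tuples is non-empty at each stage by the argument above, and the countable sub-choices inside each stage are absorbed into the enumeration (and are, in any case, supported by the $\DC$ that $\DC_\kappa$ delivers). Once $\tup{\dot t_\alpha\mid\alpha<\kappa}$ has been constructed in $V$, the name $\tup{\dot t_\alpha\mid\alpha<\kappa}^\bullet$ is forced by $p$ to be a chain of type $\check\kappa$ in $\dot T$; since $p$ and $\dot T$ were arbitrary (subject to $p$ forcing the hypotheses of $\DC_\kappa$), this establishes $\forces_\PP \DC_\kappa$.
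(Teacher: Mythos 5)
Your proposal follows essentially the same strategy as the paper's proof: a ground-model recursion of length $\kappa$, using $\ccc3$ at each stage to shrink the set of conditions deciding an extension of the partial chain to a countable one, choosing witnessing names, and mixing, with the whole construction driven by one application of $\DC_\kappa$. The paper differs in two organisational respects: it splits off $\kappa=\omega$ and handles that case by citing that $\ccc3$ forcings are proper (\autoref{prop:Mekler}) and that proper forcings preserve $\DC$; and for uncountable $\kappa$ it does not shrink a predense set directly but instead uses $\DC_{\omega_1}$ (which follows from $\DC_\kappa$) to prove \autoref{lemma:DC-antichains-dense}, extracting a genuine countable \emph{maximal antichain} from the dense open set $D_\alpha$, over which mixing is routine. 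Your uniform treatment of all $\kappa$ is a genuine simplification, provided the mixing step is done correctly.

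That mixing step is the one place you should repair. You propose to ``mix along a countable maximal antichain refining $D_\alpha^0$'', but refining a countable predense set to a countable maximal antichain is not obviously possible in $\ZF$, or even $\ZF+\DC$: the usual greedy refinement requires making choices over the uncountable partial order, and this is exactly the difficulty the paper's \autoref{lemma:DC-antichains-dense} is designed to overcome (using $\DC_{\omega_1}$, which is unavailable when $\kappa=\omega$). Two fixes are available: either follow the paper and, for uncountable $\kappa$, mix over the maximal antichain supplied by that lemma (reverting to the properness argument for $\kappa=\omega$); or mix over the countable predense set itself with a fixed enumeration $\tup{q_n\mid n<\omega}$ via the definable ``least index'' device --- the sets $B_n=\{r\leq p\mid r\forces\check q_n\in\dot G\text{ and }r\forces\check q_m\notin\dot G\text{ for all }m<n\}$ are pairwise incompatible with union dense below $p$, and one defines $\dot t_\alpha$ so that $r\forces\dot t_\alpha=\dot t_\alpha^{q_n}$ for $r\in B_n$. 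The latter is choice-free given the enumeration and the witness family, and makes your uniform argument go through for all $\kappa$ including $\kappa=\omega$. The remaining bookkeeping in your outline (restricting to $\PP$ below $p$, bundling the per-stage data so a single application of $\DC_\kappa$ suffices, and using $\AC_\omega$, which follows from $\DC_\kappa$, to see that the tree of partial constructions has no maximal nodes) is sound and matches what the paper does implicitly.
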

\begin{proof}
  For $\kappa=\omega$, the work of Asper\'o and the first author in \cite{AsperoKaragila:2020} showed that proper forcing must preserve $\DC$, and by \autoref{prop:Mekler} we get that $\ccc3$ forcing notions are proper, assuming $\ZF+\DC$. So the conclusion holds. It is therefore enough to assume that $\kappa$ is uncountable.

  \begin{lemma}\label{lemma:DC-antichains-dense}
    Assuming $\DC_{\omega_1}$, if $\PP$ is a $\ccc3$ forcing, then every dense open set contains a maximal antichain.
  \end{lemma}
  \begin{proof}[Proof of Lemma]
    This is similar to the proof of \autoref{prop:ccc2-to-ccc3-under-strong-dc}. We recursively construct an antichain inside the dense open set, and the process must stabilise at a countable stage, and the antichain must therefore be maximal.
  \end{proof}

  Let $\PP$ be a $\ccc3$ forcing and let $\dot T$ be a $\PP$-name such that $\1_\PP\forces``\dot T$ is $\check\kappa$-closed without maximal nodes''. We will construct a name for a sequence of length $\kappa$ by recursion. Suppose that for all $\beta<\alpha$, $\dot t_\beta$ was defined and $\1_\PP\forces``\tup{\dot t_\beta\mid\beta<\alpha}^\bullet$ is an increasing sequence in $\dot T$''.

  Consider the set $D=\{p\mid \exists\dot t\,p\forces\forall\beta<\alpha:\dot t_\beta<_T\dot t\}$, by the fact that $\dot T$ is forced to be $\kappa$-closed and $\alpha<\kappa$, $D$ is in fact dense and open. By \autoref{lemma:DC-antichains-dense}, $D$ contains a maximal antichain $D_0$. For each $p\in D_0$, let $\dot s_p$ be a name witnessing that $p\in D$. Note that since we assume $\DC_\kappa$, we can certainly make these countably many choices. Let $\dot t_\alpha$ be a name such that $p\forces\dot t_\alpha=\dot s_p$ for all $p\in D_0$. By the maximality of $D_0$ we get that $\1_\PP\forces\dot t_\beta<_T\dot t_\alpha$ for all $\beta<\alpha$.

  Finally, by assuming $\DC_\kappa$ holds in the ground model, this construction generates a sequence of length $\kappa$ as wanted.
\end{proof}

We note here that for $\kappa>\omega$, we know that $\ccc2$ is equivalent to $\ccc3$, so it follows that $\ccc2$ will also preserve $\DC_\kappa$ in that case.

\begin{proposition}
It is consistent that a $\ccc2$ forcing can violate $\DC$.
\end{proposition}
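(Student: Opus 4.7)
Note that by the preceding theorem together with Proposition~\ref{prop:ccc2-to-ccc3-under-strong-dc}, any ambient model $W$ in which such a counterexample can live must satisfy $\DC$ but fail $\DC_{\omega_1}$: if $\DC_{\omega_1}$ held, every $\ccc2$ forcing would already be $\ccc3$ and hence preserve $\DC$. The plan is therefore to build, via Theorem~\ref{thm:transfer}, a symmetric extension $W$ of some $V_0\models\ZFC$ with $W\models\ZF+\DC+\neg\DC_{\omega_1}$, and then to locate $\PP\in W$ which is $\ccc2$ in $W$ yet forces $\neg\DC$.

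The natural candidate is a refinement of the construction of Proposition~\ref{thm:ccc2}. First I would start with $V_0\models\ZFC+\GCH$ and form a partial order $M\in V_0$ of the shape $\bigoplus_{\alpha<\omega_1}\QQ_\alpha$, take $\sG$ to be the full-support product of the $\aut(\QQ_\alpha)$'s acting on the summands, and let $\cI$ be the ideal of countable subsets of $\omega_1$. Passing to the symmetric copy $N$ via Theorem~\ref{thm:transfer} yields $W\models\ZF+\DC$ in which $N$ is $\ccc2$ by exactly the homogeneity argument used in the proof of Proposition~\ref{thm:ccc2}. The additional requirement is that the generic filter for $N$ encode an $\omega$-sequence cofinal in $\omega_1^W$ without collapsing $\omega_1^W$ itself; since $\DC$ implies $\aleph_1$ is regular, the existence of such a sequence in $W[G]$ immediately witnesses $\neg\DC$. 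If the $\QQ_\alpha$ are merely c.c.c.\ as in the proof of Proposition~\ref{thm:ccc2}, the generic simply selects a single index $\alpha^*$ together with a c.c.c.\ generic, which will not damage $\DC$; so the summands (or the automorphism action) must be chosen with more care.

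The hardest part will be precisely this last arrangement: the symmetric suppression of large antichains wants each $\QQ_\alpha$ to be ``small'', while the generic must collectively reach cofinally into $\omega_1^W$. The likely fix is to replace the plain lottery sum by a product or iteration of such objects, so that the generic filter touches uncountably many coordinates but the sequence of touched coordinates is not well-orderable in $W[G]$; a diagonal extraction should then yield the required cofinal $\omega$-sequence which could not have been performed inside $W$. Verifying then that $N$ is $\ccc2$ in $W$ would follow the pattern of Proposition~\ref{thm:ccc2} using an analogue of \autoref{lemma:homogeneity}, and the failure of $\DC$ in $W^N$ would follow from the fact that the cofinal generic sequence in $\omega_1^W$ cannot be paralleled by a ground-model sequence. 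I expect the explicit construction to closely parallel the symmetric iteration techniques of \cite{Karagila:DC} and the non-preservation arguments of \cite{AsperoKaragila:2020}.
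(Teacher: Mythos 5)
Your opening observation---that any ambient model housing such a counterexample must satisfy $\DC$ but not $\DC_{\omega_1}$, by combining the preceding preservation theorem with \autoref{prop:ccc2-to-ccc3-under-strong-dc}---is correct and a useful sanity check. But the proof itself has a genuine gap: the construction that does all the work is never produced. You correctly diagnose that the plain lottery sum of \autoref{thm:ccc2} is useless here (the generic selects a single summand and a c.c.c.\ generic for it), and you then defer the replacement to ``a product or iteration\dots so that the generic touches uncountably many coordinates,'' with a ``diagonal extraction'' yielding an $\omega$-sequence cofinal in $\omega_1^W$. That deferred step is precisely the content of the proposition. Worse, the mechanism you propose for violating $\DC$---adding a cofinal $\omega$-sequence into $\omega_1^W$ while keeping $\omega_1^W$ a cardinal---is a Feferman--L\'evy-type arrangement that is delicate in its own right: once $\omega_1^W$ is a countable union of countable ordinals you must separately argue that it is not collapsed, and nothing in your sketch addresses this. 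Indeed, the natural implementation of your ``product along a generically chosen path'' idea is essentially the forcing of \autoref{thm:dc-ccc2-collapse}, which begins by adding a finite-condition surjection of $\omega$ onto $\omega_1$ and therefore \emph{does} collapse $\omega_1^W$; your intended witness for $\neg\DC$ (singularity of $\omega_1$) would then not apply.

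The paper's route is shorter and sidesteps this entirely: it takes the $\ccc2$ forcing of \autoref{thm:dc-ccc2-collapse}, which collapses $\omega_1$ over a suitable model of $\ZF+\DC$, and arranges in addition that the symmetric extension contains a family $\{A_\alpha\mid\alpha<\omega_1\}$ admitting no choice function, with none added by the forcing. After the collapse this becomes a \emph{countable} family without a choice function, so countable choice---and hence $\DC$---fails in the generic extension. If you want to repair your argument, the cleanest fix is to lean on \autoref{thm:dc-ccc2-collapse} and append such a choiceless $\omega_1$-indexed family, rather than trying to singularise $\omega_1$ without collapsing it.
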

\begin{proof}
  As we will see in \autoref{thm:dc-ccc2-collapse}, it is consistent with $\ZF+\DC$ that a $\ccc2$ forcing can collapse $\omega_1$. We can force, while adding such a collapsing counterexample, and also add a family of sets $\cA=\{A_\alpha\mid\alpha<\omega_1\}$ which does not admit a choice function, and will not have one added once we collapse $\omega_1$.

  Once $\omega_1$ is collapsed, the family $\cA$ will become countable and will witness that $\DC$, and in fact countable choice, fails.
\end{proof}
\section{Collapsing cardinals with ccc forcings}
\begin{theorem}[Folklore]\label{thm:folklore}
  $\DC\iff$No $\sigma$-closed forcing collapses $\omega_1$.
\end{theorem}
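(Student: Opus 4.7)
The plan is to prove the two directions of the equivalence separately.

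For the forward implication, assume $\DC$ and let $\PP$ be a $\sigma$-closed forcing. Given a $\PP$-name $\dot f$ and a condition $p_0$ forcing $\dot f\colon\omega\to\omega_1$ to be a function, I would recursively build a descending chain $p_0\geq p_1\geq\cdots$ with $p_{n+1}$ deciding $\dot f(n)$, applying $\DC$ to the tree of finite descending sequences $\langle p_1,\ldots,p_k\rangle$ below $p_0$ whose $j$th entry decides $\dot f(j-1)$; density of the set of conditions deciding $\dot f(k)$ below any given condition ensures this tree has no maximal node. A lower bound $q$ exists by $\sigma$-closure, and $q$ forces $\dot f$ to coincide with the ground-model sequence of decided values, so $\rng(\dot f)$ is contained in a countable ground-model subset of $\omega_1^V$ and $q$ forces that $\dot f$ is not surjective. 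As $p_0$ was arbitrary, $\omega_1^V$ is preserved.

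For the converse, proceed contrapositively: assume $\DC$ fails and construct a $\sigma$-closed forcing collapsing $\omega_1^V$. The clean case is $\cf(\omega_1^V)=\omega$: fix a cofinal sequence $\langle\alpha_n:n<\omega\rangle$ in $\omega_1^V$, and take $\PP$ to be the set of finite tuples $\langle e_0,\ldots,e_{k-1}\rangle$ with $e_i\colon\omega\to\alpha_i$ a surjection, ordered by end-extension. Each $\alpha_i$ is countable in $V$, so $\PP$ has no maximal condition. Any strictly descending $\omega$-chain in $\PP$ would exhibit in $V$ a full sequence $\langle e_n:n<\omega\rangle$ of simultaneous enumerations, yielding a surjection $\omega\times\omega\to\bigcup_n\alpha_n=\omega_1^V$ inside $V$---impossible. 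So all descending $\omega$-chains are eventually constant and trivially have lower bounds, and $\PP$ is $\sigma$-closed. A $V$-generic $G$ supplies the missing simultaneous enumeration, so concatenating produces a surjection from $\omega$ onto $\omega_1^V$ in $V[G]$, collapsing $\omega_1$ as required.

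The remaining case, $\omega_1^V$ regular while $\DC$ still fails, is where the main obstacle lies: the cofinal-sequence trick is unavailable, and the naive forcing by a tree $T$ witnessing $\neg\DC$ (no maximal nodes, no $\omega$-branch) is $\sigma$-closed but only adds a branch of $T$, which need not collapse $\omega_1$. Here the plan is to build a $\sigma$-closed forcing whose conditions encode partial branches of $T$ together with approximations to an enumeration $\omega\to\omega_1^V$, arranging that the absence of unbounded descending chains is inherited from the absence of $\omega$-branches of $T$ while density statements force the generic to be surjective onto $\omega_1^V$. I expect the delicate part to be coupling the two components so that $\sigma$-closure survives intact, and this is the step where most of the technical work will sit.
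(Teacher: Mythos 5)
Your forward direction is fine: it is the standard argument that under $\DC$ a $\sigma$-closed forcing decides all values of a name for a function $\omega\to\omega_1$ below any condition, which is exactly the route the paper takes (via ``$\sigma$-closed forcings add no new $\omega$-sequences of ground model elements''). Your Case~1 of the converse (when $\cf(\omega_1)=\omega$) is also correct, though it will turn out to be a special instance of the general construction and the case split is not actually needed.

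The genuine gap is Case~2, which is the entire content of the converse: you correctly identify that you must couple approximations to a collapse with nodes of a tree $T$ witnessing $\neg\DC$, but you stop at ``this is where most of the technical work will sit'' without producing the coupling. The coupling is in fact very simple, and you should not expect delicacy here. Replacing $T$ if necessary by its tree of finite chains, assume $T$ consists of finite sequences ordered by end-extension, with no maximal nodes and no infinite branch. Take
\[\PP=\sett{\tup{f,t}\middd f\in\Col(\omega,\omega_1),\ t\in T,\ \dom f=\dom t},\]
ordered coordinatewise, where $\Col(\omega,\omega_1)$ is the set of finite partial functions $\omega\to\omega_1$. The single constraint $\dom f=\dom t$ does all the work: in any decreasing $\omega$-sequence $\tup{f_n,t_n}$ the tree coordinates must stabilise (otherwise they produce an infinite branch of $T$), hence $\dom f_n$ stabilises, hence $f_n$ stabilises, so the sequence is eventually constant and trivially has a lower bound; thus $\PP$ is $\sigma$-closed for free, with no interaction to control. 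On the other hand $\PP$ projects onto $\Col(\omega,\omega_1)$ --- given $\tup{f,t}$ and any $f'\supseteq f$, extend $t$ finitely many levels (no choice needed) to some $t'$ with $\dom t'=\dom f'$ --- so the generic yields a surjection of $\omega$ onto $\omega_1^V$. Note that this argument never looks at $\cf(\omega_1)$, which is why your case division is unnecessary: when $\cf(\omega_1)=\omega$, your tree of finite tuples of surjections is itself a witness to $\neg\DC$ that happens to already encode the collapse.
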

\begin{proof}
  If $\DC$ holds, then a $\sigma$-closed forcing does not add new reals, in particular it cannot collapse $\omega_1$.\footnote{This is a folklore result that $\DC$ is equivalent to the statement ``Every $\sigma$-closed forcing does not add $\omega$-sequences of ground model elements'', see Theorem~2.1 in \cite{Karagila:DC}. One can also use properness, under $\DC$, by simply generalising the proof of Proposition~4.2 in \cite{AsperoKaragila:2020} and applying Proposition~4.4.}

  Suppose $\DC$ fails, and let $T$ witness this. Namely, $T$ is a tree of height $\omega$ without maximal nodes and without infinite branches.

  Define the forcing $\{\tup{f,t}\mid f\in\Col(\omega,\omega_1), t\in T,\dom t=\dom f\}$ with the order $\tup{f,t}\leq\tup{f',t'}$ if and only if $f'\subseteq f$ and $t$ extends $t'$ in $T$. Here $\Col(\omega,X)$ is the set of all partial functions from $\omega$ to $X$, ordered by $\supseteq$, and its generic defines a surjection from $\omega$ onto $X$.

  Easily, this forcing collapses $\omega_1$: if $G$ is $V$-generic, $\bigcup\{f\mid\exists t,\tup{f,t}\in G\}$ is a generic for $\Col(\omega,\omega_1)$. On the other hand, if $\tup{f_n,t_n}$ is a decreasing sequence, then there is some $n$ such that $t_m=t_n$ for all large enough $m$, otherwise we have a branch in $T$. Therefore $\dom f_m=\dom t_m=\dom t_n$, but then it is necessarily the case that $f_n=f_m$ for all large enough $m$ as well.
\end{proof}
\begin{proposition}\label{prop:ccc1-maker}
Suppose that there is a forcing $\CC$ without maximal antichains (so it is necessarily $\ccc1$, but quite possibly $\ccc2$), then for every forcing $\QQ$ there is a $\ccc1$ forcing which adds a $V$-generic filter for $\QQ$.
\end{proposition}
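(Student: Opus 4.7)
The plan is to combine $\CC$ and $\QQ$ into a single forcing $\PP$ such that $\PP$-genericity entails $\QQ$-genericity, while $\PP$ inherits the absence of large maximal antichains from $\CC$. The most natural candidate is the product $\PP = \CC\times\QQ$ with coordinatewise ordering. Note that since $\CC$ has no maximal antichains it cannot have a top element (otherwise $\{\1_\CC\}$ would be one), so I work with $\CC$ as a partial order without a maximum, as elsewhere in the paper. The projection of a $\PP$-generic filter to the $\QQ$-coordinate gives a $V$-generic for $\QQ$, so the substantive task is the $\ccc1$ verification.

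For the $\ccc1$ property, the strategy is by contradiction: given a maximal antichain $A\subseteq\PP$, I would attempt to extract a maximal antichain of $\CC$ and thereby contradict the hypothesis. The projection $B=\pi_\CC(A)=\{c:\exists q,(c,q)\in A\}$ is predense in $\CC$, since for any $c$ and any $q$ the condition $(c,q)$ is compatible with some element of $A$, which forces its $\CC$-coordinate to be compatible with $c$. If $B$ happened to be an antichain as well, we would be done; the difficulty is that two elements $(c_1,q_1),(c_2,q_2)\in A$ may be incompatible in $\PP$ by virtue of $\QQ$-incompatibility alone, leaving $c_1$ and $c_2$ compatible in $\CC$.

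I expect the main obstacle to be exactly this: the plain product ordering is too permissive, as $\QQ$-incompatibility in $A$ can mask genuine $\CC$-compatibilities in the projection. A refinement in the spirit of the matching constraint $\dom f=\dom t$ in \autoref{thm:folklore} is likely needed: one imposes on $\PP$ a coupling between $\CC$- and $\QQ$-coordinates that forces $\PP$-incompatibility to be witnessed in the $\CC$-coordinate. Under such a coupling the slices $A_q=\{c:(c,q)\in A\}$ amalgamate into an antichain in $\CC$ under $\pi_\CC$, and the predensity of $\pi_\CC(A)$ then upgrades it to a maximal antichain of $\CC$, contradicting the hypothesis on $\CC$. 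Identifying and verifying the correct coupling — and in particular ensuring that the resulting $\PP$ still projects onto $\QQ$ to add a $V$-generic — is the technical heart of the proof.
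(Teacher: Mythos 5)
Your construction is the paper's: the product $\QQ\times(\CC\setminus\{\1_\CC\})$ with the coordinatewise order (and your observation that $\CC$ can have no maximum, since otherwise $\{\1_\CC\}$ would be a maximal antichain, is exactly why the paper strips $\1_\CC$). The projection argument for adding a $V$-generic for $\QQ$ is also fine. The problem is that your proof stops precisely where the work begins: you correctly note that $\pi_\CC(A)$ need not be an antichain, but the ``coupling'' you propose to repair this is never defined, let alone verified, and you explicitly defer ``the technical heart of the proof''. As written there is no argument that any forcing --- the plain product or a modified one --- is $\ccc1$, so this is a genuine gap rather than a complete proof by another route.

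For comparison, the paper does not modify the product; it slices rather than projects. For a fixed $q\in\QQ$ the set $A_q=\{c:\tup{q,c}\in A\}$ \emph{is} an antichain in $\CC$, because two conditions with the same first coordinate are incompatible in the product if and only if their $\CC$-coordinates are. Since no antichain of $\CC$ is maximal, one picks $c'$ incompatible with every member of $A_q$ and argues that $A\cup\{\tup{q,c'}\}$ is still an antichain, so no antichain of $\PP$ is maximal and $\ccc1$ holds vacuously. To your credit, the obstruction you isolated does resurface in that last amalgamation step: to get $\tup{q,c'}\incompatible\tup{p,c}$ when $p\neq q$ but $p\compatible q$ one needs $c'\perp c$, so $c'$ really has to be chosen in $B_q^\perp$ where $B_q=\{c:\exists p\compatible q,\ \tup{p,c}\in A\}$, and $B_q$ need not be an antichain; some extra care (or extra structure on $\CC$, such as the lottery-sum form used in the corollary that follows, where incompatibility across summands is automatic) is needed to secure such a $c'$. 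So your diagnosis of where the difficulty lies is accurate --- but diagnosing the difficulty is not the same as resolving it, and the proposal resolves nothing.
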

\begin{proof}
  Consider the forcing $\PP$ given by $\{\tup{q,c}\mid q\in\QQ,c\in\CC\setminus\{\1_\CC\}\}$ with the order given by $\tup{q,c}\leq\tup{q',c'}$ if and only if $q\leq q'$ and $c\leq c'$ in the respective forcings. If $A$ is an antichain in $\PP$, then for every $q\in\QQ$, $A_q=\{c\mid\tup{q,c}\in A\}$ is an antichain in $\CC$. Since $\CC$ does not have any maximal antichains, there is some $c'$ such that $A_q\cup\{c'\}$ is still an antichain in $\CC$. But easily, $A\cup\{\tup{q,c'}\}$ is an antichain, and therefore $A$ was not maximal to begin with.
\end{proof}
\begin{corollary}
If there is a forcing without maximal antichains, then there is a $\ccc1$ forcing which collapses $\omega_1$. It is also consistent that this forcing is also $\sigma$-closed.
\end{corollary}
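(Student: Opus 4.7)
The plan is to derive both assertions from Proposition \ref{prop:ccc1-maker}. For the first, I take $\QQ=\Col(\omega,\omega_1)$ in the proposition; the resulting $\ccc1$ forcing $\PP$ adds a $V$-generic filter for $\Col(\omega,\omega_1)$, and hence collapses $\omega_1$.

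For the consistency of additionally making $\PP$ be $\sigma$-closed, I will work in a model of $\ZF$ in which $\AC_\omega$ fails. Since $\DC$ implies $\AC_\omega$, $\DC$ fails as well, so Theorem \ref{thm:folklore} furnishes a $\sigma$-closed forcing $\QQ$ collapsing $\omega_1$. The forcing built in Proposition \ref{prop:ccc1-maker} is $\QQ\times(\CC\setminus\{\1_\CC\})$, which is $\sigma$-closed whenever both factors are; so it suffices to exhibit in the same model a $\sigma$-closed forcing $\CC$ with no maximal antichain.

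To produce such a $\CC$, fix a countable family $\{B_n\}_{n<\omega}$ of non-empty sets admitting no choice function (available since $\AC_\omega$ fails), and let $\CC$ consist of a root $\1_\CC$ together with all pairs $(n,b)$ for $n<\omega$ and $b\in B_n$, ordered on the non-root part by $(n,b)\leq(n',b')$ iff $n=n'$. Any descending $\omega$-sequence in $\CC$ must have a constant first coordinate, and any condition sharing that coordinate is then a lower bound, so $\CC$ is $\sigma$-closed. A maximal antichain in $\CC\setminus\{\1_\CC\}$ would have to meet each fibre $\{n\}\times B_n$ in exactly one element, producing a choice function for $\{B_n\}_{n<\omega}$---contradiction.

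The main obstacle is this last construction of a $\sigma$-closed forcing without maximal antichains in a $\ZF$-setting, which requires exploiting a concrete failure of choice; assuming $\neg\AC_\omega$ reduces it to the routine combinatorics above, after which assembling the final forcing via Proposition \ref{prop:ccc1-maker} is straightforward.
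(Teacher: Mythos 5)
Your first part and the overall architecture of the second --- reduce everything to Proposition~\ref{prop:ccc1-maker}, take $\QQ$ from Theorem~\ref{thm:folklore}, and get $\sigma$-closure of $\PP=\QQ\times(\CC\setminus\{\1_\CC\})$ because every descending $\omega$-sequence in each factor has a lower bound --- match the paper. The genuine difference is your $\CC$: the paper reuses the symmetric copy from Proposition~\ref{thm:ccc1} with finite supports (so that no infinite descending sequence is symmetric), whereas you build $\CC$ by hand from a countable family $\{B_n\}$ with no choice function. Your verification that $\CC\setminus\{\1_\CC\}$ is $\sigma$-closed and has no maximal antichain is correct, and as a construction it is pleasantly elementary.

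The gap is in concluding that $\PP$ is $\ccc1$. The proof of Proposition~\ref{prop:ccc1-maker} produces a condition $\tup{q,c'}$ incompatible with those members of $A$ whose first coordinate is exactly $q$; it says nothing about members $\tup{q'',c''}\in A$ with $q''$ compatible with but different from $q$, since then $c''\notin A_q$. For your $\CC$ this is fatal, because two conditions of $\CC\setminus\{\1_\CC\}$ are compatible precisely when they lie in the same fibre, so $\QQ$ can be used to index into the fibres. Concretely, the canonical witness to $\neg\DC$ extracted from the failure of $\AC_\omega$ is the tree $T=\bigcup_{k<\omega}\prod_{n<k}B_n$ of finite partial choice functions; taking this $T$ in Theorem~\ref{thm:folklore}, the set
\[A=\bigl\{\bigl\langle\tup{f,t},\tup{n,t(n)}\bigr\rangle\mid n<\omega,\ \tup{f,t}\in\QQ,\ \dom t=n+1\bigr\}\]
is an antichain in $\PP$ (two distinct conditions of $\QQ$ with the same domain $n+1$ are incompatible, and distinct $n$'s land in incompatible fibres), it is predense (restrict or extend any $\tup{f,t}$ to domain $n+1$; its second coordinate is automatically compatible with anything in fibre $n$), and it is uncountable, since it projects onto all $f\colon n+1\to\omega_1$. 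So with this natural instantiation your $\PP$ has an uncountable maximal antichain and fails $\ccc1$ outright. To repair this you cannot simply cite Proposition~\ref{prop:ccc1-maker} as a black box: you must either verify directly that for your chosen $\QQ$ and $\{B_n\}$ no uniform sequence of maps from maximal antichains of $\QQ$ into the $B_n$ exists (which is what a maximal antichain of the product amounts to), or take $\CC$ sufficiently homogeneous, which is exactly what the paper's symmetric copy is engineered to provide.
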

\begin{proof}
The first part is an immediate corollary of \autoref{prop:ccc1-maker}. The second part follows by considering the proof of \autoref{thm:ccc1} defining $\CC$ as the symmetric copy using finite supports and noting that no infinite decreasing sequence is symmetric. The same argument as \autoref{thm:folklore} works to show that there are no decreasing sequences in $\PP$ as defined in \autoref{prop:ccc1-maker}.
\end{proof}

Arguably, $\ccc1$ is not a reasonable definition for the countable chain condition anyway, rendering the above not much more than a curiosity. However, we can get a significantly more interesting result: it is consistent with $\ZF+\DC$ that a $\ccc2$ forcing collapses $\omega_1$.
\begin{theorem}\label{thm:dc-ccc2-collapse}
It is consistent with $\ZF+\DC$ that there exists a $\ccc2$ forcing which collapses cardinals.
\end{theorem}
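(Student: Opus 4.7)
The plan is to construct a model $W\models\ZF+\DC$ containing a $\ccc2$ forcing $\PP$ that collapses $\omega_1^W$, mirroring the strategy of \autoref{thm:folklore} but using the failure of $\DC_{\omega_1}$ in place of the failure of $\DC$. The ambient model $W$ will be a variant of the symmetric extension from \autoref{thm:ccc2}, obtained by applying \autoref{thm:transfer} to the lottery sum $\bigoplus_{\alpha<\omega_1}\QQ_\alpha$ of homogeneous c.c.c.\ forcings, but now using the wreath-product automorphism group $\sym(\omega_1)\wr\aut(\QQ)$ (which acts transitively on summands) together with the $\sigma$-complete ideal of countable subsets. The $\sigma$-completeness preserves $\DC$, while the transitive wreath action makes the summands symmetrically indistinguishable: the resulting $W$ will satisfy $\ZF+\DC+\ccc2\wedge\neg\ccc3$ (whence $\neg\DC_{\omega_1}$ by \autoref{prop:ccc2-to-ccc3-under-strong-dc}) and, crucially, no choice function on the family $\{\QQ_\alpha:\alpha<\omega_1\}$ exists in $W$.

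Inside $W$, I will define $\PP$ analogously to the coupled forcing of \autoref{thm:folklore}: conditions are pairs $(p,c)$ where $c$ is a member of the predense set $\bigcup_\alpha\QQ_\alpha$, and $p$ is a finite partial function $\omega\to\omega_1$, coupled so that the last value added to $p$ equals the summand-index $\alpha(c)$. Extensions refine $c$ and extend $p$ by attaching the index of the (possibly new) summand. This coupling forces the generic's $p$-coordinate to traverse the predense set $\bigcup_\alpha\QQ_\alpha$; since this predense set has no countable predense subset, any sufficiently generic filter must ``touch'' every summand through successive extensions, producing a surjection $\omega\to\omega_1^W$ via the $p$-coordinate and hence collapsing $\omega_1^W$.

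The main technical claim is that $\PP$ is $\ccc2$ in $W$. An uncountable antichain $A\subseteq\PP$ in $W$ would, after a symmetric delta-system-like reduction, yield an uncountable family $\{(p_i,c_i)\}$ whose $c_i$ sit in pairwise distinct summands $\QQ_{\alpha_i}$; selecting these representatives amounts to a choice function on an uncountable subfamily of $\{\QQ_\alpha\}$, which does not exist in $W$ by our wreath-product arrangement. This parallels the $\ccc2$-argument in \autoref{thm:ccc2}, translated to the coupled setting of $\PP$ and exploiting the same symmetric-extension mechanism to prevent uncountable choice.

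The main obstacle is to formulate the coupling between $p$ and $c$ precisely enough that both (a) the dense sets $E_\alpha=\{(p,c):\alpha\in\rng p\}$ are $W$-symmetric, so the generic realizes the surjection, and (b) the extraction of a choice function from an uncountable antichain really does go through, without introducing spurious compatibilities. Both points are delicate because the lottery-sum ordering on $\CC$ natively only allows $c$ to be refined within a single summand, and so a straightforward coupling cannot meet $E_\alpha$ for $\alpha\ne\alpha(c)$. I expect that the remedy is to reinterpret $\CC$ in $W$ as a symmetric structure whose notion of refinement crosses summands via the wreath action, or equivalently to replace $\CC$ by a symmetric quotient under the summand-permutation group; verifying this is the heart of what remains.
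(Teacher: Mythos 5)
Your high-level strategy (a symmetric lottery sum plus a finite-condition coupling that reads off a surjection $\omega\to\omega_1$) is the right one, but the construction as described has a gap that you yourself flag and then defer, and that deferral is precisely the nontrivial content of the proof. A condition of your $\PP$ carries a \emph{single} $c$ lying in a \emph{single} summand $\QQ_{\alpha(c)}$, and the lottery-sum order only ever refines $c$ inside that summand; hence the $p$-coordinate can never legitimately acquire a value other than $\alpha(c)$, the sets $E_\alpha$ for $\alpha\neq\alpha(c)$ are not dense below any condition, and the generic does not produce a surjection onto $\omega_1$. Your proposed remedy (quotienting $\CC$ by the summand permutations, or letting refinement ``cross summands via the wreath action'') works against you: permuting the summands destroys the canonical indexing by $\omega_1$ that the coupling needs, and crossing summands reintroduces exactly the compatibilities that would let one build uncountable antichains. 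The fix the paper uses is to index the summands by $\omega\times\omega_1$, keep the automorphism group acting on each summand \emph{individually} (so the index set is fixed pointwise and survives into the symmetric extension), and let a condition be a pair $\tup{t,\vec b}$ with $t\in\omega_1^{<\omega}$ and a finite \emph{tuple} $\vec b$ with $b_i\in\BB_{i,t(i)}$: each new value $t(n)=\alpha$ opens a fresh summand $\BB_{n,\alpha}$, so the surjection is genuinely forced. This is also why the ground-model forcing must be \emph{productively} c.c.c.\ (conditions with the same stem $t$ form a subset of a finite product of copies of $\BB$), a hypothesis absent from your sketch.

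The $\ccc2$ verification also needs repair. Extracting ``a choice function on an uncountable subfamily of $\{\QQ_\alpha\}$'' from an uncountable antichain is not a contradiction in itself: an antichain is a set, and each of its elements already exhibits a member of some summand, so no choice is being made. The actual argument is about which \emph{symmetric} subsets can exist: by the subset control of \autoref{thm:transfer} with the $\sigma$-complete ideal, any symmetric subset of the sum has nontrivial (i.e.\ not a union of orbits) intersection with only countably many summands, and by homogeneity a trivial nonempty intersection is a whole orbit, hence contains compatible conditions. In the coupled forcing this splits into two cases (uncountably many conditions sharing a stem $t$, handled by productive c.c.c.; or uncountably many stems at some fixed length, handled by the orbit argument applied to the last coordinate), and neither case reduces to the nonexistence of a choice function.
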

\begin{proof}
  Starting from a model of $\ZFC$, let $\BB$ be an uncountable, productively c.c.c.\ forcing which is homogeneous and does not have a maximum, e.g.\ the Boolean completion of $\Add(\omega,\omega_1)$ with its maximum removed.

  Consider the lottery sum $\bigoplus_{\tup{n,\alpha}\in\omega\times\omega_1}\BB_{n,\alpha}$, where $\BB_{n,\alpha}=\BB$ for all $\tup{n,\alpha}$. We take the automorphism group which acts on each summand individually, and with the ideal of subsets given by unions over countably many summands at a time. Applying \autoref{thm:transfer} we get a symmetric copy of this partial order that has the property that the index of the summation is still $\omega\times\omega_1$, since we are not allowed to move the indices. This means that we can still discuss $\BB_{n,\alpha}$ as the $\tup{n,\alpha}$th summand in the symmetric copy as well.

  By homogeneity, any subset $A$ of the symmetric copy which meets uncountably many summands simultaneously will have at most countably many non-trivial intersections. This is similar to the construction in \autoref{thm:ccc2}.

  In addition we may assume that $\DC$ holds in the symmetric extension, since the ideal of subsets is certainly countably closed. We may also assume that $\BB$ itself does not have any new subsets as well. Working in this symmetric extension consider the partial order $\PP$ given by pairs $\tup{t,\vec b}$ such that:
  \begin{enumerate}
  \item $t\in\omega_1^{<\omega}$ and $\dom t=n$.
  \item $\vec b=\tup{b_0,\dots,b_{n-1}}$ and $b_i\in\BB_{i,t(i)}\}$.
  \end{enumerate}
  We define the order on $\PP$ as follows: $\tup{t,\vec b}\leq\tup{t',\vec b'}$ if and only if all the following conditions hold:
  \begin{enumerate}
  \item $t'=t\restriction\dom t'$, i.e.\ $t$ extends $t'$ in the forcing given by $\omega_1^{<\omega}$.
  \item For all $i<\dom t'$, $b_i\leq_{n,\alpha} b'_i$.
  \end{enumerate}

  In other words, we are describing an iteration: first add a surjection from $\omega$ onto $\omega_1$ using finite conditions, $f$, then consider the product $\prod_{\tup{n,\alpha}}\BB_{n,\alpha}$. 

  If $G\subseteq\PP$ is a generic filter, then $\bigcup_{\tup{t,\vec b}\in G}t$ is the surjection $f$ we mention. It remains to prove that every antichain in $\PP$ is countable.

  We denote by $\pi_{n,\alpha}$ the projection of $\PP$ to $\BB_{n,\alpha}$ and by $\pi_1$ the projection to $\omega_1^{<\omega}$. Suppose that $D$ is an uncountable subset of $\PP$,
  we now consider two cases.

  \textbf{Case 1:} There is some $t\in\omega_1^{<\omega}$ such that $\pi_1^{-1}(t)\cap D$ is uncountable.

  In this case we may assume that $D=\pi_1^{-1}(t)\cap D$. Therefore, as a partial order, $D$ is a subset of $\{t\}\times\prod_{i\in\dom t}\BB_{i,t(i)}\cong\prod_{i\in\dom t}\BB$. Since $\BB$ was productively c.c.c.\, $D$ must contain two compatible elements.

  \textbf{Case 2:} There is no such $t$. In this case $\pi_1(D)$ is an uncountable subset of $\omega_1^{<\omega}$, so there is some $n+1$ such that $\pi_1(D)\cap\omega_1^{n+1}$ is uncountable. We may assume that $\pi_1^{-1}(\omega_1^{n+1})\cap D=D$ in this case, so if $\tup{t,\vec b}\in D$, then $\dom t=n+1$. We consider now $\pi_{n,t(n)}(D)$ for all $t\in\pi_1(D)$. This is an uncountable family, so it must be that co-countably many of these are unions of orbits, and in particular these orbits must be uncountable. But this is impossible, since that means that if $\tup{t,\vec b}\in D$ is such condition, then $\pi^{-1}_1(t)\cap D$ is uncountable, and we assume there is no such $t$. Therefore Case 2 is impossible.

  This shows that $\PP$ is $\ccc2$, as wanted. 
\end{proof}
We combine the above with \autoref{cor:cba-co-wf-are-nice} to obtain the following conclusion.
\begin{corollary}
$\ZF+\DC$ cannot prove that a $\ccc2$ partial order will have a $\ccc2$ Boolean completion.\qed
\end{corollary}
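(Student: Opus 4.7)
The plan is to derive a one-line contradiction from Theorem~\ref{thm:dc-ccc2-collapse} combined with Corollary~\ref{cor:cba-co-wf-are-nice}. First I would invoke Theorem~\ref{thm:dc-ccc2-collapse} to fix a model $V\models\ZF+\DC$ containing a $\ccc2$ forcing $\PP$; inspection of that proof shows the collapsed cardinal is in fact $\omega_1$, since the first coordinate of any generic produces a surjection from $\omega$ onto $\omega_1$. I would then argue for contradiction, assuming that $\ZF+\DC$ proves every $\ccc2$ partial order has a $\ccc2$ Boolean completion.

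Under that hypothesis, $\BB(\PP)$ is $\ccc2$, hence $\ccc1$ by Proposition~\ref{prop:zf-implications}, and it is by construction a complete Boolean algebra. Corollary~\ref{cor:cba-co-wf-are-nice} then transfers the conclusion of Corollary~\ref{cor:ccc3-is-well-behaved} to $\BB(\PP)$. Because $\DC$ implies countable choice, $\omega_1$ is regular in $V$, so the stronger half of that corollary applies: $\BB(\PP)$ preserves all cofinalities and collapses no cardinals at all.

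This is the desired contradiction. Forcing with $\PP$ and with $\BB(\PP)$ produces the same generic extensions --- the construction of the Boolean completion via separative quotient followed by completion goes through in $\ZF$, as observed in the paragraph just before Proposition~\ref{prop:cBa} --- so any collapse effected by $\PP$ is also effected by $\BB(\PP)$. But $\PP$ makes $\omega_1$ countable while $\BB(\PP)$ cannot, which is impossible.

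The only delicate point worth flagging, and what I would treat as the main obstacle to close cleanly, is the necessity of $\omega_1$ being regular in $V$: without that, Corollary~\ref{cor:ccc3-is-well-behaved} only forbids collapses of cardinals strictly above $\omega_1$, which on its own would not rule out the collapse of $\omega_1$ itself. That regularity is exactly what $\DC$ furnishes in the background hypothesis, so the argument fits together without needing to revisit the internals of Theorem~\ref{thm:dc-ccc2-collapse} or the transfer theorem.
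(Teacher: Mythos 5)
Your argument is correct and is precisely the one the paper intends: combine \autoref{thm:dc-ccc2-collapse} (a $\ccc2$ forcing collapsing $\omega_1$ under $\ZF+\DC$) with \autoref{cor:cba-co-wf-are-nice} applied to the Boolean completion, using that $\DC$ makes $\omega_1$ regular so the completion could not collapse it. The point you flag about regularity of $\omega_1$ is the right one to flag, and it is indeed discharged by $\DC$.
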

\section{Remarks on higher chain conditions}
As we suggest in the introduction, we chose to focus on c.c.c.\ as that is the most commonly used chain condition. Unlike c.c.c., general $\kappa$-c.c.\ forcings do not have a nice iteration theorem, so analogues of Martin's Axiom are either provably false or must be restricted to fairly weak families of forcings.\footnote{That is not to say that there is no interest in higher forcing axioms. It is just that they cannot be based on chain conditions alone.} Nevertheless, $\kappa$-c.c.\ forcings still commonly pop up throughout set theory.

All the constructions that we have done in and around the various $\ccc{i}$ definitions can be done with $\kappa$-c.c.\ for any regular $\kappa$. If we also add $\DC_{<\kappa}$, which we most certainly can as suggested in \autoref{thm:transfer}, then our forcings at least somewhat well-behaved.

However, in general forcing over models of $\ZF$ tends to wreak havoc, or perhaps bring order, to the cardinal structure in somewhat unintended ways. For example, forcing with $\omega_2^{<\omega_2}$ is a forcing that will add no new subsets of $\omega$ when starting with a model of $\ZFC$, but if we force with this forcing over a model where $\power(\omega)$ is a countable union of countable sets, then we will invariably add many reals and collapse $\omega_1$, since we will necessarily add a well-ordering of the ground model's $\power(\omega)$.

Since $\DC_{<\kappa}$ is equivalent to the statement ``$\kappa$-closed forcing is $\kappa$-distributive'',\footnote{This is a folklore theorem; a simple proof can be found in \cite{Karagila:DC}.} we know that if we limit ourselves to $\kappa$-closed and $\kappa$-c.c.\ forcings, then statements related to sets of ordinals (e.g., collapsing $\kappa^+$) will be have reasonable proofs. Because every forcing is at least $\omega$-closed by definition and $\DC_{<\omega}$ is a theorem of $\ZF$, this is one more reason to focus on the case of $\kappa=\aleph_1$.
\section{Conclusions and open questions}
We hope that we managed to convince the reader at this point that the ``correct'' definition of the countable chain condition in $\ZF$ is $\ccc3$. Both due to its power in preserving cofinalities under relatively mild assumptions, and the failure to do that very same thing for other definitions of c.c.c. For example, in \cite{AsperoKaragila:2020} the subject of proper forcing is studied in $\ZF+\DC$. One of the observations there is that a proper forcing does not collapse $\omega_1$.\footnote{This is embedded in the proof of Proposition~4.2 there.}
\begin{corollary}
$\ZF+\DC$ does not prove that a $\ccc2$ forcing is proper.
\end{corollary}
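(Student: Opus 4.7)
The plan is to combine \autoref{thm:dc-ccc2-collapse} with the cited fact from \cite{AsperoKaragila:2020} that under $\ZF+\DC$ a proper forcing cannot collapse $\omega_1$. Concretely, \autoref{thm:dc-ccc2-collapse} produces a model of $\ZF+\DC$ in which there is a $\ccc2$ forcing $\PP$ collapsing $\omega_1$ (in fact collapsing some cardinal, but the construction visibly collapses $\omega_1$ because the first coordinate of $\PP$ is the Levy-type poset $\omega_1^{<\omega}$ adjoining a surjection $f\colon\omega\to\omega_1$).

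The argument is then a one-line contrapositive. Suppose for contradiction that $\ZF+\DC$ proves ``every $\ccc2$ forcing is proper''. Pass to the symmetric extension of \autoref{thm:dc-ccc2-collapse}, where $\DC$ holds, and apply the hypothesis to the forcing $\PP$ constructed there: $\PP$ would then be proper, so by the observation of \cite{AsperoKaragila:2020} (embedded in the proof of Proposition~4.2 there) $\PP$ does not collapse $\omega_1$. This contradicts the verified collapsing property of $\PP$.

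There is no real obstacle here beyond pointing to the right two ingredients. The only thing worth being explicit about is that the forcing $\PP$ of \autoref{thm:dc-ccc2-collapse} lives in a model of $\ZF+\DC$ (which is part of that theorem's conclusion, since the ideal of subsets used in applying \autoref{thm:transfer} is countably complete) and that it genuinely collapses $\omega_1$, not merely some cardinal above $\omega_1$; this is visible from the fact that $\pi_1$ projects $\PP$ onto $\omega_1^{<\omega}$, whose generic filter encodes a surjection from $\omega$ onto $\omega_1$ in the extension. With both points in place, the corollary follows immediately.
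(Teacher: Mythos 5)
Your argument is correct and is exactly the paper's intended proof: the corollary is stated immediately after \autoref{thm:dc-ccc2-collapse} and the remark that proper forcings do not collapse $\omega_1$ under $\ZF+\DC$, and the paper leaves the combination of these two facts implicit. Your extra care in checking that the model of \autoref{thm:dc-ccc2-collapse} satisfies $\DC$ and that $\PP$ collapses $\omega_1$ specifically (via the $\omega_1^{<\omega}$ coordinate) is exactly the right due diligence.
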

In contrast, as the proof in \autoref{prop:Mekler} shows, $\ZF+\DC$ readily shows that a $\ccc3$ forcing is proper.

We conclude our paper with a few open questions and conjectures which we hope will motivate researchers, young and old, to expand our knowledge about the mechanism of forcing in $\ZF$.

\begin{question}
Is it consistent with $\ZF$ that a $\ccc3$ forcing can collapse $\omega_1$?
\end{question}
\begin{question}
Does complete Boolean algebra + $\ccc1$ imply $\ccc3$ in $\ZF$? In $\ZF+\DC$?
\end{question}
Since complete Boolean algebras which are $\ccc1$ satisfy Bukovsk\'y's condition, the above would admit a positive answer if the following question is answered positively.
\begin{question}
Is Bukovsk\'y's condition with $\kappa=\aleph_1$ equivalent to $\ccc3$ in $\ZF$?
\end{question}
\begin{question}
What can we say about the preservation of non-$\aleph$ cardinals when forcing with a $\ccc3$ forcing?
\end{question}
The last question is of particular importance, since we know very little about how the non-$\aleph$ cardinals behave in generic extensions, and since a forcing can add new cardinals to models of $\ZF$, this opens the door to even more questions.\footnote{In \cite{Monro:1983} a generic extension of the Cohen model is given in which there is an amorphous set; since the Cohen model can be linearly ordered, it cannot contain an amorphous set. So a new cardinal is added by forcing.}

\begin{question}
What can be said about Martin's Axiom of the various versions of c.c.c.\ that were presented here, even in the case where we only require meeting countably many dense open sets? Clearly Martin's Axiom for $\ccc1$ implies Martin's Axiom for $\ccc3$. But do the reverse implications hold, or perhaps like \autoref{prop:cBa} the version for $\ccc3$ is separate from the others?
\end{question}
\begin{question}
Forcing axioms can be seen, in general, as stating that a certain class of forcings has an iteration theorem. In the case of Martin's Axiom this means finite support iteration. Does any of the versions of c.c.c.\ presented here have such a nice iteration theorem in $\ZF$ or even $\ZF+\DC$?
\end{question}
We note that Martin's Axiom was studied in $\ZF$, to some extent, by Shannon in \cite{Shannon:1990} and later by Tachtsis in \cite{Tachtsis:2016}. These papers concentrated on what we call $\ccc2$, which we see is quite weak, making Martin's Axiom perhaps a bit too strong. But as we point out, it is very unclear what happens when we harness the full power of Martin's axiom, even in the study of $\ccc3$.\medskip

  We can define another version of c.c.c.\ by stating ``Every antichain extends to a maximal countable antichain''. This readily implies $\ccc2$ and for a complete Boolean algebra the proof of \autoref{prop:cBa} shows that the two are equivalent. We can modify the proofs of \autoref{thm:ccc1} and \autoref{thm:ccc2} to show that this principle is indeed stronger than $\ccc2$ and that it does not imply $\ccc3$ in $\ZF+\DC$.
\begin{question}\label{qn:max}
  Where does this principle sit in the hierarchy of countable chain conditions? Can we have a forcing that satisfies this version of c.c.c.\ and collapses cardinals?
\end{question}
\bibliographystyle{amsplain}

\begin{thebibliography}{10}

\bibitem{AsperoKaragila:2020}
David Asper\'o and Asaf Karagila, \emph{Dependent choice, properness, and
  generic absoluteness}, The Review of Symbolic Logic (2020), 1--25, Online
  version.

\bibitem{Bukovsky:1973}
Lev Bukovsk\'{y}, \emph{Characterization of generic extensions of models of set
  theory}, Fund. Math. \textbf{83} (1973), no.~1, 35--46. \MR{332477}

\bibitem{Bukovsky:2017}
\bysame, \emph{Generic extensions of models of {ZFC}}, Comment. Math. Univ.
  Carolin. \textbf{58} (2017), no.~3, 347--358. \MR{3708778}

\bibitem{Cohen:1964}
Paul~J. Cohen, \emph{The independence of the continuum hypothesis. {II}}, Proc.
  Nat. Acad. Sci. U.S.A. \textbf{51} (1964), 105--110. \MR{159745}

\bibitem{Grigorieff:1975}
Serge Grigorieff, \emph{Intermediate submodels and generic extensions in set
  theory}, Ann. of Math. (2) \textbf{101} (1975), 447--490. \MR{373889}

\bibitem{Hodges:1974}
Wilfrid Hodges, \emph{Six impossible rings}, J. Algebra \textbf{31} (1974),
  218--244. \MR{347814}

\bibitem{Jech:ST2003}
T.~Jech, \emph{Set {T}heory}, Springer Monographs in Mathematics,
  Springer-Verlag, Berlin, 2003, The third millennium edition, revised and
  expanded.

\bibitem{Jech:AC}
T.~J. Jech, \emph{The {A}xiom of {C}hoice}, North-Holland Publishing Co.,
  Amsterdam-London; Amercan Elsevier Publishing Co., Inc., New York, 1973,
  Studies in Logic and the Foundations of Mathematics, Vol. 75.

\bibitem{Karagila:2019}
Asaf Karagila, \emph{Iterating symmetric extensions}, J. Symb. Log. \textbf{84}
  (2019), no.~1, 123--159.

\bibitem{Karagila:DC}
\bysame, \emph{Preserving dependent choice}, Bull. Pol. Acad. Sci. Math.
  \textbf{67} (2019), no.~1, 19--29. \MR{3947821}

\bibitem{Laver:2007}
Richard Laver, \emph{Certain very large cardinals are not created in small
  forcing extensions}, Ann. Pure Appl. Logic \textbf{149} (2007), no.~1-3,
  1--6. \MR{2364192}

\bibitem{Mekler:1984}
Alan~H. Mekler, \emph{C.c.c. forcing without combinatorics}, J. Symbolic Logic
  \textbf{49} (1984), no.~3, 830--832. \MR{758934}

\bibitem{Miller:2011}
Arnold~W. Miller, \emph{The maximum principle in forcing and the axiom of
  choice}, arXiv \textbf{1105.5324} (2011), 14.

\bibitem{Monro:1983}
G.~P. Monro, \emph{On generic extensions without the axiom of choice}, J. Symb.
  Log. \textbf{48} (1983), no.~1, 39--52.

\bibitem{Plotkin:1969}
Jacob~Manuel Plotkin, \emph{Generic embeddings}, J. Symb. Log. \textbf{34}
  (1969), 388--394. \MR{252211}

\bibitem{Shannon:1990}
Gary~P. Shannon, \emph{Provable forms of {M}artin's axiom}, Notre Dame J.
  Formal Logic \textbf{31} (1990), no.~3, 382--388. \MR{1072074}

\bibitem{Tachtsis:2016}
Eleftherios Tachtsis, \emph{On {M}artin's axiom and forms of choice}, MLQ Math.
  Log. Q. \textbf{62} (2016), no.~3, 190--203. \MR{3509702}

\bibitem{Usuba:LS}
Toshimichi Usuba, \emph{Choiceless {L}\"owenheim--{S}kolem property and uniform
  definability of grounds}, Proceedings of Symposium on Advances in
  Mathematical Logic 2018 (2021), 1--19.

\end{thebibliography}
\providecommand{\bysame}{\leavevmode\hbox to3em{\hrulefill}\thinspace}
\providecommand{\MR}{\relax\ifhmode\unskip\space\fi MR }
\providecommand{\MRhref}[2]{%
  \href{http://www.ams.org/mathscinet-getitem?mr=#1}{#2}
}
\providecommand{\href}[2]{#2}

\end{document}